\begin{document}
\title{A cut finite element method for the Darcy problem}\thanks{We gratefully acknowledge Professor Erik Burman for the many precious suggestions and comments.}\thanks{This research was partially supported by ERC AdG project CHANGE n. 694515.}
\author{Riccardo Puppi}\address{Chair of Modelling and Numerical Simulation, École Polytechnique Fédérale de Lausanne, Lausanne, CH (riccardo.puppi@epfl.ch)}
%
\date{\today}
\begin{abstract} 
We present and analyze a cut finite element method for the weak imposition of the Neumann boundary conditions of the Darcy problem. The Raviart-Thomas mixed element on both triangular and quadrilateral meshes is considered. Our method is based on the Nitsche formulation studied in~\cite{puppi_burman} and can be considered as a first attempt at extension in the unfitted case. The key feature is to add two ghost penalty operators to stabilize both the velocity and pressure fields. We rigorously prove our stabilized formulation to be well-posed and derive \emph{a priori} error estimates for the velocity and pressure fields. We show that an upper bound for the condition number of the stiffness matrix holds as well. Numerical examples corroborating the theory are included.
\end{abstract}
%
%
\subjclass	{65N12,65N30,65N85}
\keywords{CutFEM, Neumann, fictitious domain, unfitted, immersed, stabilized, Darcy, Poisson, Raviart-Thomas}
\maketitle
\section*{Introduction}
Mesh generation is one of the major bottlenecks for the classical finite element method  for the numerical solution of partial differential equations (PDEs). It is a very costly process because it necessitates not only of important computational power, but also of ad hoc human intervention. Mathematical problems for which mesh generation becomes a very demanding task are, in general, problems set in complicated geometries or such that the domain changes during the simulation. The last few decades have seen the flourishing of an abundance of numerical methods trying to overcome this issue. The most popular approaches belong to the family of the so-called fictitious domain methods (from the pioneering work~\cite{PESKIN1972252}) where the possibly complicate domain $\Omega$ is immersed in a much simpler geometry $\Omega_{\mathcal T}$, for which the generation of the mesh is a simple task. In the last few years, the Cut Finite Element Method (CutFEM)~\cite{MR3416285}, a particular fictitious domain method, gained a lot of attention and showed its potential in different applications in science and engineering~\cite{MR3239219,MR3489084,MR3402347}. Unlike other methods, the CutFEM relies on solid theoretical foundations, and its key feature is to add weakly consistent stabilization operators~\cite{MR2738930} to the variational formulation of the discrete problem to transfer the stability and approximation properties from the finite element scheme constructed on the background mesh to its cut finite element counterpart. 

In this contribution we study the weak imposition of the Neumann boundary conditions for the Darcy problem when the mesh does not fit the boundary of the domain. Let us recall that the Darcy problem combines a constitutive equation describing the flow of a fluid in porous media with a conservation of mass equation, and it is coupled with suitable boundary conditions. This first-order system of PDEs can also be derived as a dual formulation of the Poisson problem. An important difference between the variational formulations of the Poisson and the Darcy problems is that the Dirichlet boundary conditions that are enforced by modifying the trial space in the Poisson case are now natural, i.e., they appear as an integral in the right hand side, and the Neumann boundary conditions that are before natural, have to be enforced as essential boundary conditions in the Darcy case.  

The weak imposition of Dirichlet boundary conditions for the Poisson problem is a quite well-understood matter; let us refer, for instance, to~\cite{MR2683379,MR2045519,MR1365557}. 
On the other hand, the problem of weakly imposing the Neumann boundary conditions for the Darcy problem has not been sufficiently explored in the literature, especially in the context of a fictitious domain method. We base our analysis on the classical $\bm H(\dive)$-conforming Raviart-Thomas discretization in simplicial and quadrilateral meshes and adopt the Nitsche-type method introduced in~\cite{puppi_burman} for the weak imposition of the essential boundary conditions. The discrete formulation is very ill-posed because of the mismatch between the computational mesh and the physical domain where the PDEs live. We show that this affects not only the accuracy of the approximation scheme, but also the conditioning of the arising linear system. Our strategy to recover the well-posedness of the discrete formulation is in line with~\cite{MR2738930,MR2899249,MR3264337} and consists of adding to the variational formulation at the discrete level two weakly consistent ghost penalty operators acting separately on the velocity and on the pressure fields. The discrete functional setting is unusual since it is based on mesh-dependent norms scaling as $\bm H^1 \times H^1$, instead of the standard $\bm H(\dive)\times L^2$. Hence, we derive \emph{a priori} error estimates for the velocity and pressure fields which are optimal for the chosen topology, but not for the usual ones. A further drawback of our method is lost of the divergence-free property of the Raviart-Thomas element.

An outline of the remainder of the paper follows. We set the notation and introduce the model problem in Section~\ref{section1}. In Section~\ref{section2}, we introduce the model problem and its Raviart-Thomas discretization for both triangular and quadrilateral meshes. In Section~\ref{section3} we explain how we interpolate regular functions when the mesh does not fit the boundary of the physical domain. Section~\ref{section4} contains the discrete stabilized formulation and its numerical analysis: we rigorously derive the estimates guaranteeing the stability of our formulation and prove the \emph{a priori} error estimates. Section~\ref{section5} is devoted to the study of the condition number of the stiffness matrix. We prove that the ghost penalty stabilization restores the usual conditioning of the boundary-fitted case. In Section~\ref{section6} we explain how to deal and what changes in the case of pure Dirichlet boundary conditions. Finally, in Section~\ref{section7} we present some numerical experiments illustrating the theory.

\section{Notation and model problem} \label{section1}
We briefly introduce some useful notations and definitions for the forthcoming analysis. Let $d\in\{2,3\}$ and $D$ be a Lipschitz-regular \emph{domain} (subset, open, bounded, connected) of $\mathbb R^d$, with boundary $\partial D$ and unit outer normal $\n$. Let $L^2(D)$ denote the space of square integrable functions on $D$, equipped with the usual norm $\norm{\cdot}_{L^2(D)}$. Let $L^2_0(D)$ be the subspace of $L^2(D)$ of functions with zero average, where the average of $v\in L^2(D)$ is $\overline v:=\frac{1}{\abs{D}}\int_D v$. For a given $\varphi:D\to \R$ sufficiently regular and $\bm\alpha$ multi-index with $\abs{\bm\alpha}:=\sum_{i=1}^d\alpha_i$, we define $\displaystyle D^{\bm\alpha}\varphi:={\frac{\partial^{\abs{\bm\alpha}}\varphi}{\partial x_1^{\alpha_1}\dots\partial x_d^{\alpha_d}}}$ and $\partial_n^j \varphi:=\sum_{\abs{\bm\alpha}=j}D^{\bm\alpha} \varphi  \n ^{\bm\alpha}$, where $\n^{\bm\alpha}:=n_1^{\alpha_1}\dots n_d^{\alpha_d}$, when $\varphi:\Omega\to \R$ is smooth enough. We indicate by $H^k(D)$, for $k\in\N$, the standard Sobolev space of functions in $L^2(D)$ whose $k$-th order weak derivatives belong to $L^2(D)$, equipped with the norm $\norm{\varphi}^2_{H^k(D)}:=\sum_{\abs{\bm \alpha}\le k} \norm{D^{\bm\alpha}\varphi}^2_{L^2(D)}$. Sobolev spaces of fractional order $H^r(D)$, $r\in\R$, can be defined by interpolation techniques, see~\cite{MR2424078}. The space $H^1_{0,\Sigma}(D)$ consists of functions in $H^1(D)$ with vanishing trace on $\Sigma$. We write $H^1_{0}(D)$ instead of $H^1_{0,\partial D}(D)$. Let us denote $\bm L^2(D):=\left(L^2(D)\right)^d$ and $\bm H^k(D):= \left( H^k(D) \right)^d$. We define the Hilbert space $\bm H(\dive;D)$ of vector fields in $\bm L^2(D)$ with divergence in $L^2(D)$, endowed with the graph norm, denotes as $\norm{\cdot}_{H(\dive;D)}$. Moreover, we set $\bm H_{0,\Sigma}(\dive;D):=\{\vv\in \bm H(\dive;D): \vv\cdot\n =0\;\text{on}\;\Sigma\}$ and $\bm H_{0}(\dive;D):=\bm H_{0,\partial D}(\dive;D)$. Let $H^\frac{1}{2}(\partial D)$ be the range of the trace operator of functions in $H^1(D)$ and we define its restriction to $\Sigma$ as $H^{\frac{1}{2}}(\Sigma)$. Both $H^\frac{1}{2}(\partial D)$ and $H^{\frac{1}{2}}(\Sigma)$ can be endowed with an intrinsic norm, see~\cite{MR2328004}. The dual space of $H^{\frac{1}{2}}(\Sigma)$ is denoted $H^{-\frac{1}{2}} (\Sigma)$. The duality pairing between $H^{\frac{1}{2}}(\Sigma)$ and $H^{-\frac{1}{2}}(\Sigma)$ will be denoted with a formal integral notation.  Finally, let $\bm H^\frac{1}{2}(\partial D):= \left( H^\frac{1}{2}(\partial D) \right)^d$, $\bm H^{\frac{1}{2}}(\Sigma):= \left( H^{\frac{1}{2}}(\Sigma) \right)^d$ and $\bm H^{-\frac{1}{2}} (\Sigma):= \left( H^{-\frac{1}{2}} (\Sigma) \right)^d$. We will denote as $\mathbb Q_{r,s,t}(D)$ the vector space of polynomials of degree at most $r$ in the first variable, at most $s$ in the second and at most $t$ in the third one (analogously for th case $d=2$) in $D$, $\mathbb P_u(D)$ the vector space of polynomials of degree at most $u$ in $D$, $\tilde{\mathbb P}_\ell(D)$ the vector space of homogeneous polynomials of degree $\ell$ in $D$. We may write $\mathbb Q_k(D)$ instead of $\mathbb Q_{k,k}(D)$ or $\mathbb Q_{k,k,k}(D)$. In the same way, we can define vector spaces of polynomials defined in hypersurfaces of $\R^d$. With a slight abuse of notation, we will use the same symbol $\abs{\cdot}$ to denote both the $d$-dimensional Lebesgue measure and the $(d-1)$-dimensional Hausdorff measure. Given $D\subset \R^d$ and $\Sigma$ a hypersurface of $\R^d$ or a subset of it, $\abs{D}$ and $\abs{\Sigma}$ denote the $d$-dimensional Lebesgue measure of $D$ and the $(d-1)$-dimensional Hausdorff measure of $\Sigma$, respectively. $E^\circ$ and $\operatorname{int} E$ denotes the interior of $E\subset \R^d$. Given $x,y\in\R$, we will write $x\lesssim y$ if there exists $c>0$, independent of $x$,$y$, such that $x\le c y$ and $x\sim y$ if $x\lesssim y$ and $y\lesssim x$. $\mathcal O$ will denote the classical Landau symbol. $C$ will denote generic positive constants that may change with each occurrence throughout the document but are always independent of the local mesh size and the mutual position of mesh and physical domain unless otherwise specified. 

Let $\Omega$ be a Lipschitz-regular domain of $\R^d$ with boundary $\Gamma$ such that $\Gamma = \Gamma_N \cup \Gamma_D$, where $\Gamma_N$, $\Gamma_D$ are non-empty, open, and disjoint. The Darcy problem is a linear system of partial differential equations modeling the flow of groundwater through a porus medium, here represented by $\Omega$, with permeability $\kappa$.
Given $\f\in\bm L^2(\Omega)$, $g\in L^2(\Omega)$, $u_N\in H^{-\frac{1}{2}}(\Gamma_N)$, $p_D\in H^{\frac{1}{2}}(\Gamma_D)$, we look for $\u:\Omega\to\R^d$ and $p:\Omega\to\R$ such that
\begin{equation}
\begin{aligned}\label{prob:cont}
		\kappa^{-1} \u -\nabla p = \f, \qquad & \text{in}\;\Omega,\\
		\dive\u = g,\qquad&\text{in}\;\Omega,\\
		\u\cdot\n=u_N,\qquad&\text{on}\;\Gamma_N,\\
		p=p_D,\qquad&\text{on}\;\Gamma_D.
\end{aligned}
\end{equation}
The unknowns $\u$ and $p$ represent, respectively, the seepage velocity and the pressure of the fluid.  The first equation of \eqref{prob:cont} is called \emph{Darcy law} relating the velocity and the pressure gradient of the fluid, the second one expresses \emph{mass conservation} (when $g\equiv 0$), the third and the fourth equations are, respectively, a \emph{Neumann boundary condition} for the velocity field and a \emph{Dirichlet boundary condition} for the pressure. Moreover, $\kappa\in\mathbb R^{d\times d}$ is symmetric positive definite with eigenvalues $\lambda_i$ such that $0<\lambda_{\min}\le\lambda_i\le\lambda_{\max}<+\infty$, for every $i=1,\dots,d$.
In the subsequent analysis we are going to consider, for the sake of simplicity, $\kappa=I$ the identity matrix.

\section{The finite element discretization} \label{section2}
Let us introduce $\left( \mathcal T_h\right)_{h>0}$, a family of admissible triangular or quadrilateral meshes (see, for instance, Chapter~3 of~\cite{MR1299729}) such that, for every $h>0$, $\overline\Omega\subsetneq\Omega_\T$, $\Omega_{\mathcal T}$ being the \emph{fictitious domain}, i.e., $\Omega_{\mathcal T}:=\operatorname{int}\cup_{K\in\mathcal{T}_h} K$. Let us denote the collection of all the facets (edges if $d=2$ and faces if $d=3$) as $\mathcal F_h$ and partition it into two disjoint sets: the faces lying on the boundary of $\Omega_{\mathcal T}$, denoted as $\mathcal F_h^\partial$, and $\mathcal F_h^i$, the internal ones. For every cut element $K\in\mathcal T_h$, let us denote its intersection with the boundary as $\Gamma_K$. It will be clear from the context if with $\Gamma_K$ we mean the intersection with the whole boundary, i.e., $\Gamma_K:=\Gamma\cap K$ or with just one of its disjoint components $\Gamma_N$, i.e., $\Gamma_K:=\Gamma_N\cap K$, or $\Gamma_D$, i.e., $\Gamma_K:=\Gamma_D\cap K$. It will also be useful to define the collection of the cut-elements, namely $\mathcal{G}_h:=\{K\in\mathcal{T}_h: \abs{\Gamma_K}\neq 0\}$, its two sub-collections $\mathcal G_h(\Gamma_N):=\{K\in\mathcal{T}_h: \abs{\Gamma_N \cap K}\neq 0\}$ and $\mathcal G_h(\Gamma_D):=\{K\in\mathcal{T}_h: \abs{\Gamma_D \cap K}\neq 0\}$, that we assume to be disjoint, and the interior part $\Omega_{I,h}:=\Omega\setminus \cup_{K\in\mathcal G_h} K$. Let $\T_h(\Omega_{I,h}):=\{K\in\T_h:K\subset\overline\Omega_{I,h}\}$. The collections of the internal and boundary faces entirely contained in the bulk of the domain are respectively denoted as $\mathcal F_h^i(\Omega_{I,h})$ and $\mathcal F_h^\partial(\Omega_{I,h})$. The collection of facets in the boundary region are denoted as $\mathcal F_h^{\Gamma}:=\{f \in\mathcal F_h^i: \exists\ K\in\mathcal G_h\ \text{such that}\ f\subset\partial K\}$.
 
 Let us  assign to each element $K\in\mathcal T_K$ its diameter $h_K$ and denote $h:=\max_{K\in\mathcal T_h}h_K$. We assume the background mesh to be \emph{shape-regular}, i.e., there exists $\sigma>0$, independent of $h$, such that $\max_{K\in\mathcal T_h}\frac{h_K}{\rho_K}\le \sigma$, $\rho_K$ being the diameter of the largest ball inscribed in $K$. Moreover, $\T_h$ is supposed to be \emph{quasi-uniform} in the sense that there exists $\tau>0$, independent of $h$, such that $\min_{K\in\mathcal T_h} h_K\ge \tau h$. We fix an orientation for the internal faces, i.e., given $f\in\mathcal F_h^i$ such that $f=\partial K_1 \cap\partial K_2$, we assume that the unit normal on $f$ points from $K_1$ toward $K_2$. Let $\varphi:\Omega\to\R$ be smooth enough. Then, for all $f\in\mathcal F_h^i$ and a.e. $x\in f$, we define the \emph{jump} of $\varphi$ as
 \begin{align*}
[\varphi](x):= \restr{\varphi}{K_1}(x) - \restr{\varphi}{K_2}(x), 
\end{align*}
where $f=\partial K_1\cap \partial K_2$. We may remove the subscript $f$ when it is clear from the context to which facet we refer to.

The following mild assumptions on how the mesh may be intersected by the boundary $\Gamma$ will be helpful. First, let us require that the number of facets to be crossed to move from a cut element $K$ to an uncut element $K'$ is uniformly bounded with respect to $h$.
\begin{assumption}\label{assumption:mesh}
	There exists $N>0$ such that, for every $h>0$ and $K\in\mathcal G_h$, there exist $K'\in\T_h(\Omega_{I,h})$ and at most $N$ elements $\left(K_i\right)_{i=1}^N\subset\T_h$ such that $K_1=K$, $K_N=K'$ and $K_i\cap K_{i+1}$ is a cut facet, for every $1\le i\le N-1$.
\end{assumption}	
Then, we assume that it is possible to subdivide the boundary region into a fixed (i.e., independent of $h$) number of patches such that every cut element belongs to one patch, every patch contains an uncut element, the diameter of every patch is $\mathcal O(h)$, and all the patches are diffeomorphic to a reference patch.
\begin{assumption}\label{assumption:patch}
The boundary zone $\bigcup_{K\in\mathcal G_h} K$ can be decomposed into $N_{\mathcal P}$ patches $\left(\mathcal P_\ell \right)_{\ell=1}^{N_{\mathcal P}}$, $\overline P_\ell=\bigcup_{ K\in\mathcal P_\ell } K$, $1\le \ell\le  N_{\mathcal P}$, satisfying:
\begin{enumerate}[(i)]
	\item\label{condition:i} for every $K\in\mathcal G_h$ there exists $1\le\ell\le N_{\mathcal P}$ such that $K\in \mathcal P_\ell$;
	\item\label{condition:ii} for every $1\le \ell\le  N_{\mathcal P}$ there exists $K'\in\mathcal T_h(\Omega_{I,h})\cap \mathcal P_\ell$;
	\item\label{condition:iii} there exists $C_h>0$ such that, for every $1\le \ell \le N_{\mathcal P}$ and $K\in \mathcal P_\ell$, it holds $h_K \ge C_h h_{\mathcal P_\ell}$, where $h_{\mathcal P_\ell}:= \operatorname{diam} \left(  P_\ell \right) $;
	\item for every $1\le \ell\le N_{\mathcal P}$ there exist an affine map $F_{\ell}$ and a reference patch $\hat P$ with $\operatorname{diam}(\hat P)=1$, such that $F_\ell:\hat P\to P_\ell$ is invertible. 
\end{enumerate}	
For every $1\le \ell\le  N_{\mathcal P}$, we denote as $\mathcal F_\ell:=\{f\in\mathcal F_h^i: f\subseteq P_\ell, f\not\subset \partial P_\ell\}$. 
\end{assumption}
Let $V_h\subset \bm H(\dive;\Omega_\T)$ and $Q_h\subset L^2(\Omega_\T)$ be the Raviart-Thomas finite element space defined in the whole fictitious domain $\Omega_\T$. For the sake of completeness, we recall its construction (for the boundary-fitted case the interested reader is referred, for instance, to~\cite{puppi_burman,MR2139400,MR2059447,MR1299729}).

 In the case of triangles and tetrahedra, we consider as reference element $\hat K$ is the unit $d$-simplex, i.e., the triangle of vertices $(0,0)$, $(1,0)$, $(0,1)$ if $d=2$ and the tetrahedron of vertices $(0,0,0)$, $(1,0,0)$, $(0,1,0)$, $(0,0,1)$ when $d=3$. For quadrilaterals, the reference element $\hat K$ is the unit $d$-cube $[0,1]^d$.

Let us construct the spaces of polynomials on the reference element. For the triangular meshes (see~\cite{MR3097958}), 
\begin{equation*}
	\mathbb{RT}_k(\hat K):=\left(\mathbb{P}_k(\hat K)\right)^d\oplus \bm x\tilde{\mathbb P}_k(\hat K),	\qquad \mathbb M_k(\hat K):= \mathbb P_k(\hat K),
\end{equation*}
while, in the case of quadrilaterals (see~\cite{MR2139400,MR3097958}), it reads as follows 
\begin{equation*}
	\mathbb{RT}_k(\hat K):=
	\begin{cases}
		\mathbb{Q}_{k+1,{k}}(\hat K)\times\mathbb{Q}_{k,k+1}(\hat K),\qquad&\text{if}\;d=2,\\
		\mathbb{Q}_{k+1,{k},k}(\hat K)\times \mathbb{Q}_{k,k+1,k}(\hat K)\times\mathbb{Q}_{k,k,k+1}(\hat K),\qquad&\text{if}\;d=3,
	\end{cases}
	\qquad \mathbb M_k(\hat K):= \mathbb Q_k(\hat K).
\end{equation*}
Let $\F_K:\hat K\to K$ be a diffeomorphism mapping the reference element to a general $K\in\mathcal T_h$. For triangles we consider an affine bijection $\F_K(\hat x):= B_K \hat x + b_K$, where $B_K\in \mathbb R^{d\times d}$ is non-singular and invertible, and $b_K\in\mathbb R^d$. For quadrilateral meshes, an affine mapping would constrain us to parallelograms, hence we let $\F_K$ being a bijection bilinear for each component, so that we can map the reference element to abritrary convex quadrilaterals. The diffeomorphism $\F_K$ induces the pull-back operators
\begin{alignat}{3}\label{preliminaries_fem:diffeomorphism}
	\F^p_K:& L^2(K) \to L^2(\hat K),\qquad &&\F^p_K(q):=q\circ \F_K, \\
	\F^v_K:& \bm H (\dive; K) \to \bm H (\dive; \hat K) ,\qquad &&\F^v_K(\vv):=\abs{\det(D\F_K)}D\F_K^{-1}\vv.
\end{alignat}
Let us observe that $\F^p_K$ and $\F^v_K$ are isometric isomorphisms (see, for instance,~\cite{MR2566587}). For the construction of our discrete spaces, we will use $\F^p_K$ and $\F^v_K$. The inverse of $\F^v_K$ is commonly known as the \emph{contravariant Piola transform} or, more simply, \emph{Piola transform}, and we denote it as
\begin{align*}
	\mathcal P_K : \bm H (\dive; \hat K) \to \bm H (\dive; K),\qquad \mathcal P_{K} (\hat \vv) :=\abs{\det\left(D\F_K  \right)}^{-1}D\F_K \hat\vv.
\end{align*}	
We define the following finite-dimensional vector spaces
\begin{equation*}
	\begin{aligned}
		V_h&:=\big\{\vv_h \in \bm H\left(\dive;\Omega\right):\F_K^v \left(\restr{\vv_h}{K}\right)\in \mathbb{RT}_k(\hat K),\quad\forall\ K\in\mathcal T_h\big\}\\
		& = \big\{\vv_h \in \bm H\left(\dive;\Omega\right):\restr{\vv_h}{K}\in \mathbb{RT}_k( K),\quad\forall\ K\in\mathcal T_h\big\}, \\
		Q_h&:=\big\{q_h\in L^2\left(\Omega\right):\F^p_K\left(\restr{q_h}{K}\right)\in\mathbb{M}_{k}\left(\hat K\right),\quad\forall\ K\in\mathcal T_h\big\} \\
		& = \big\{q_h\in L^2\left(\Omega\right): \restr{q_h}{K}\in\mathbb{M}_{k}\left( K\right),\quad\forall\ K\in\mathcal T_h\big\},
	\end{aligned}
\end{equation*}
where $\mathbb{RT}_k(K) :=\{\mathcal P_{K}\hat\w_h: \hat\w_h\in\mathbb {RT}_k(\hat K) \}$.  Remember that in the pure Neumann case, i.e., $\Gamma=\Gamma_N$, we have to filter out the constant functions from $Q_h$ by imposing the zero average constraint.

\section{Interpolation strategy}\label{section3}
Let us construct the interpolation operators by using the degrees of freedom of $V_h$ and $Q_h$. For every $\vv\in \bm H^s(\hat K)$, $s>\frac{1}{2}$, $r_{\hat K}$ is uniquely defined by:
\begin{equation}\label{dofs_velocity}
	\begin{aligned}
		\int_{\hat f} r_{\hat K}\vv\cdot \n \hat q_h = \int_{\hat f} \hat \vv\cdot\n \hat q_h, \qquad&\forall\ \hat q_h\in\Psi_k(\hat f), \\
		\int_{\hat K} r_{\hat K}\vv\cdot \w_h = \int_{\hat K} \hat \vv\cdot \hat \w_h, \qquad&\forall\ \hat \w_h\in \Psi_k\left(\hat K\right),\qquad\text{if}\;k>0,\\
	\end{aligned}
\end{equation}
where, for triangles,
\begin{equation*}
	\Psi_k\left(\hat K\right):=\left(\mathbb P_{k-1}(\hat K)\right)^d,\qquad \Psi_k(\hat f):=\mathbb P_{k}(\hat f),\\
\end{equation*}
and, for quadrilaterals,
\begin{align*}
	\Psi_k\left(\hat K\right):=&
	\begin{cases}
		\mathbb{Q}_{k-1,{k}}(\hat K)\times\mathbb{Q}_{k,k-1}(\hat K),\qquad&\text{if}\;d=2,\\
		\mathbb{Q}_{k-1,{k},k}(\hat K)\times \mathbb{Q}_{k,k-1,k}(\hat K)\times\mathbb{Q}_{k,k,k-1}(\hat K),\qquad&\text{if}\;d=3,
	\end{cases}\\
	\Psi_k\left(\hat f\right):=&
	\begin{cases}
		\mathbb P_k(\hat f),\qquad&\text{if}\;d=2,\\
		\mathbb Q_k(\hat f),\qquad&\text{if}\;d=3,
	\end{cases}
\end{align*}
for all facets (edges if $d=2$, faces if $d=3$) $\hat f$ of $\hat K$. We define $r_K: \bm H^s(K)\to \mathbb{RT}_k(K)$, so that $r_K = (\F_K^v)^{-1}\circ r_{\hat K}\circ \F_K^v = \mathcal P_K \circ r_{\hat K}\circ \mathcal P_K^{-1}$. The global interpolant $r_h^\T:\bm H(\dive;\Omega_\T)\cap \prod_{K\in\mathcal T_h}\bm H^s(K)  \to V_h$ is readily defined by gluing together the local interpolation operators, that is $\restr{r_h^\T}{K}:=r_K$, $K\in\T_h$. Let us move to the pressure case. We start from the reference element $\hat K$, and define $\Pi_{\hat K}:L^2(\hat K)\to\mathbb M_k(\hat K)$ which acts on $\xi \in L^2(\hat K)$ as
\begin{align}\label{dofs_pressure}
	\int_K \Pi_{\hat K} \xi q_h&= \int_{\hat K} \xi q_h, \qquad \forall\ q_h\in \mathbb M_k(\hat K).
\end{align}
Then, for a general $K\in\mathcal T_h$, we define $\Pi_{ K}:L^2(K)\to\mathbb M_k(\hat K)$ such that $\Pi_K = (\F_K^p)^{-1}\circ \pi_{\hat K}\circ \F_K^p $. Finally, let $\Pi_h^\T: L^2\left(\Omega_\T\right)\to Q_h$ such that for every $K\in\mathcal T_h$, $\restr{\Pi_h^\T}{K}:=\Pi_K$.
\begin{remark}
	Given $\vv\in \bm H(\dive;\Omega_\T)\cap \prod_{K\in\mathcal T_h}\bm H^s(K)$, the degrees of freedom for $r_{\hat K}\left(\restr{\vv}{\hat K}\right)$ given by~\eqref{dofs_velocity} are invariant under $\F_K^v$, for every $K\in\T_h$. Similarly, given $q\in L^2(\Omega_\T)$, the degrees of freedom for $\Pi_{\hat K}\left(\restr{q}{\hat K}\right)$ given by~\eqref{dofs_pressure} are invariant under $\F_K^p$, for every $K\in\T_h$.
\end{remark}
The key property of $r_h^\T:\bm H(\dive;\Omega_\T)\cap \prod_{K\in\mathcal T_h}\bm H^s(K) \to V_h$, $s>\frac{1}{2}$, and $\Pi_h^\T: L^2(\Omega_\T)\to Q_h$ is that the following diagram commutes and, in particular, $\dive V_h = Q_h$.
\begin{equation}\label{comm_diag}
	\begin{CD}
		\bm H(\dive;\Omega_\T)\cap \prod_{K\in\mathcal T_h}\bm H^s(K) @>\dive>> L^2(\Omega_{\mathcal T}) \\
		@VVr_h^\T V @VV\Pi_h^\T V \\
		V_h @>\dive>> Q_h.
	\end{CD}	
\end{equation}
From~\cite{MR2923416}, there exist $\bm E: \bm H^{t}\left(\Omega\right)\to\bm H^{t}\left(\R^d\right)$, $t\ge 1$, and $E:  H^{r}\left(\Omega\right)\to  H^{r}\left(\R^d\right)$, $r\ge 1$, universal (degree-independent) Sobolev-Stein extensions such that $\dive \circ \bm E = E\circ \dive E$. We define, for $t\ge 1$ and $r\ge 1$,
\begin{equation*}
	\begin{aligned}
		r_{h}:& \bm H^{t}\left(\Omega\right)\to V_h,\qquad \vv\mapsto r_h^\T \left( \restr{\bm E\left(\vv\right)}{\Omega_{\mathcal T}}    \right)  ,\\
		\Pi_{h}:& H^{r}\left(\Omega\right)\to Q_h,\qquad q\mapsto \Pi_{h}^\T \left( \restr{E\left(q\right)}{\Omega_{\mathcal T}}    \right).
	\end{aligned}
\end{equation*}
\begin{remark}\label{remark:mass_conservation}
	By construction, the commutativity of diagram~\eqref{comm_diag} is preserved when restricting to the physical domain $\Omega$, namely when employing $\restr{V_h}{\Omega}$, $\bm H(\dive;\Omega)$, $r_h$, and $\restr{Q_h}{\Omega}$, $L^2(\Omega)$, $\Pi_h$ instead of $V_h$, $\bm H(\dive;\Omega_{\mathcal T})$, $r_h^\T$, and $Q_h$, $L^2(\Omega_{\mathcal T})$, $\Pi_h^\T$. See Section~\ref{numerical_experiment:mass}.
\end{remark}



\section{The stabilized formulation}\label{section4}
Given $k\in\N$, the order of the Raviart-Thomas element employed for the discretization, we introduce two \emph{ghost penalty} jumps-based operators to enhance the stability of our discrete formulation and, in particular, to recover stability estimates independent of the mesh-boundary intersection (see~\cite{MR2738930,MR3264337,MR3802428,MR3268662}. We define
\begin{equation}\label{eq:ghost_penalty}
	\begin{aligned}
		\bm {j}_h (\w_h,\vv_h):=& \sum_{f\in \mathcal F_h^{\Gamma}} \sum_{j=0}^k h^{2j+1} \int_f [\partial_n^j \w_h] [\partial_n^j \vv_h],\qquad \w_h,\vv_h \in V_h,\\
		j_h (r_h,q_h):=&  \sum_{f\in \mathcal F_h^{\Gamma}} \sum_{j=0}^k h^{2j-1} \int_f [\partial_n^j  r_h] [\partial_n^j  q_h],\qquad r_h,q_h \in Q_h.	
	\end{aligned}
\end{equation}	
\begin{remark}\label{rmk:ghost_penalty}
	A wide zoo of ghost penalty operators has been proposed in the literature. For instance, it is possible to show that $\bm{j}_h(\cdot,\cdot)$ and ${j}_h(\cdot,\cdot)$ are equivalent to the following operators.
	\begin{align*}
		\bm{s}_h (\w_h,\vv_h):=& \sum_{\ell = 1}^{N_{\mathcal P}}  \int_{P_\ell}\left( \w_h -\bm {\pi}_\ell \w_h\right)\vv_h,   \qquad \w_h,\vv_h \in V_h,\\	
		\bm{g}_h (\w_h,\vv_h):=& \sum_{\ell = 1}^{N_{\mathcal P}} \int_{P_\ell}\left( \w_h -\bm {\mathcal E}_\ell \w_h\right)\vv_h,   \qquad \w_h,\vv_h \in V_h,\\	
		s_h (r_h,q_h):=&  \sum_{\ell =1}^{N_{\mathcal P}} h^{-2} \int_{P_\ell}\left( r_h -\pi_\ell r_h\right)q_h,  \qquad r_h,q_h \in Q_h,\\
		g_h (r_h,q_h):=&  \sum_{\ell =1}^{N_{\mathcal P}} h^{-2} \int_{P_\ell}\left( r_h -\mathcal E_\ell r_h\right)q_h,  \qquad r_h,q_h \in Q_h,
	\end{align*}
	namely it holds, respectively,
	\begin{alignat*}{3}
		\bm{j}_h(\vv_h,\vv_h)\lesssim& \bm{s}_h (\vv_h,\vv_h) \lesssim \bm{j}_h(\vv_h,\vv_h),\qquad 	  &&\bm{j}_h(\vv_h,\vv_h)\lesssim \bm{g}_h (\vv_h,\vv_h) \lesssim \bm{j}_h(\vv_h,\vv_h),\qquad\forall\ \vv_h \in V_h, \\
		j_h(q_h,q_h)\lesssim& s_h (q_h,q_h) \lesssim j_h(q_h,q_h), 	  &&j_h(q_h,q_h)\lesssim g_h (q_h,q_h) \lesssim j_h(q_h,q_h),\;\;\;\;\, \qquad\forall\ q_h\in Q_h.
	\end{alignat*}
	Here,
	\begin{align*}
		\bm{\pi}_{\ell}: \bm {L}^2(\mathcal P_\ell) \to \mathbb {RT}_k(\mathcal P_\ell),\qquad	\pi_{\ell}:  L^2(\mathcal P_\ell) \to \mathbb {M}_k(\mathcal P_\ell),
	\end{align*}
	denote the $L^2$-orthogonal projections onto the respective  finite-dimensional spaces, and
	\begin{align*}
		\bm {\mathcal E}_\ell: \mathbb{RT}_k(K') \to \mathbb {RT}_k(\mathcal P_\ell),\qquad		\mathcal E_{\ell}:  \mathbb{P}_k(K') \to \mathbb {M}_k(\mathcal P_\ell),
	\end{align*}
	are the canonical extensions of the respective polynomials, see~\cite{MR3942178,preuss_thesis}, where $K'$ is the uncut element of the $\ell$-th patch (see Assumption~\ref{assumption:patch}). From the implementation point of view, the operators $\bm{s}_h(\cdot,\cdot)$, ${s}_h(\cdot,\cdot)$ and $\bm{g}_h(\cdot,\cdot)$, ${g}_h(\cdot,\cdot)$  turn out to be a more convenient choice of $\bm{j}_h(\cdot,\cdot)$, $j_h(\cdot,\cdot)$, respectively, when a higher-order discretization is employed because of the evaluation of the high order derivatives. In the numerical experiments of Section~\ref{section7} we use the projection-based operators $\bm s_h(\cdot,\cdot)$ and $s_h(\cdot,\cdot)$. 
\end{remark}
We are now ready to introduce our stabilized discrete formulation. The idea is to employ the Nitsche formulation for the Darcy flow, which has been proposed and analyzed in~\cite{puppi_burman}, stabilizing it with the ghost penalty operators introduced above.

Find $\left( \u_h,p_h\right) \in V_h\times Q_h$ such that
\begin{equation}\label{cutfem:disc_pb}
	\begin{aligned}	
		a_h(\u_h,\vv_h)+ \bm{j}_h(\u_h,\vv_h) +  b_1 (\vv_h,p_h)  = \int_\Omega \f\cdot \vv_h + \int_{\Gamma_D} p_D \vv_h\cdot\n + h^{-1} \int_{\Gamma_N} u_N  \vv_h\cdot\n, \quad &\forall\ \vv_h\in V_h, \\
		b_1 (\u_h,q_h) - j_h(p_h,q_h) = \int_\Omega g q_h- \int_{\Gamma_N} u_N q_h,\quad & \forall\ q_h\in Q_h,
	\end{aligned}
\end{equation}
where
\begin{alignat*}{3}
	a_h(\w_h,\vv_h):=&\int_\Omega \w_h \cdot \vv_h + h^{-1}\int_{\Gamma_N}\w_h\cdot \n \vv_h\cdot \n,\qquad&& \w_h,\vv_h \in V_h, \\
	b_1(\vv_h,q_h):=&\int_\Omega q_h \dive \vv_h - \int_{\Gamma_N} q_h  \vv_h\cdot \n,\qquad &&\vv_h \in V_h, q_h\in Q_h.
\end{alignat*}
It will be convenient to rewrite~\eqref{cutfem:disc_pb} in the following more compact form.

Find $\left( \u_h,p_h\right) \in V_h\times Q_h$ such that
\begin{align}\label{cutfem:disc_pb2}
	\mathcal A_h \left( ( \u_h,p_h); (\vv_h,q_h) \right) = \mathcal F_h (\vv_h,q_h), \qquad\forall\ (\vv_h,q_h) \in V_h\times Q_h,
\end{align}
where, for $\left(\w_h,r_h\right),\left(\vv_h,q_h\right)\in V_h\times Q_h$,
\begin{align*}
	\mathcal A_h \left( (\w_h,r_h); (\vv_h,q_h) \right) : = & 	a_h(\w_h,\vv_h)+ \bm j_h(\w_h,\vv_h) +  b_1 (\vv_h,r_h) +  b_1 (\w_h,q_h) - j_h(r_h,q_h),\\
	\mathcal F_h (\vv_h,q_h) : = & \int_\Omega \f\cdot\vv_h + \int_{\Gamma_D} p_D \vv_h\cdot\n+ h^{-1} \int_{\Gamma_N} u_N \vv_h\cdot\n + \int_\Omega g q_h - \int_{\Gamma_N} u_N q_h.
\end{align*}
\begin{proposition}[Weak Galerkin Orthogonality]\label{prop:weak_go}
	Let $\left(\u,p\right) \in \bm H (\dive;\Omega)\times L^2(\Omega)$ be the solution of~\eqref{prob:cont} and $\left(\u_h,p_h\right)$ the one of~\eqref{cutfem:disc_pb2}. Then,
	\begin{align*}
		\mathcal A_h \left((\u-\u_h,p-p_h) ;(\vv_h,q_h) \right) = \bm j_h(\u,\vv_h) - j_h(p,q_h), \qquad\forall\ (\vv_h,q_h)\in V_h\times Q_h.
	\end{align*}
\end{proposition}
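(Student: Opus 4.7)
The strategy is the standard one for Galerkin orthogonality: test the continuous PDEs against arbitrary discrete test functions $(\vv_h,q_h)\in V_h\times Q_h$, integrate by parts, insert the boundary data using the BCs satisfied by $(\u,p)$ so that the resulting identities match the consistent part of the discrete formulation~\eqref{cutfem:disc_pb}, then subtract. Since $\bm j_h$ and $j_h$ are only weakly consistent (they vanish on discrete arguments only, not on the exact solution), they will survive the subtraction and produce the right-hand side.

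First I would handle the Darcy law. Taking $\vv_h\in V_h$, multiplying $\u-\nabla p=\f$ by $\vv_h$ and integrating over $\Omega$, the integration by parts on $\int_\Omega\nabla p\cdot\vv_h$ yields
\begin{equation*}
\int_\Omega \u\cdot\vv_h+\int_\Omega p\,\dive\vv_h-\int_{\Gamma_N}p\,\vv_h\cdot\n-\int_{\Gamma_D}p_D\,\vv_h\cdot\n=\int_\Omega \f\cdot\vv_h,
\end{equation*}
where I have used $p=p_D$ on $\Gamma_D$. Using the Neumann BC $\u\cdot\n=u_N$ on $\Gamma_N$, I can trivially rewrite $h^{-1}\int_{\Gamma_N}\u\cdot\n\,\vv_h\cdot\n=h^{-1}\int_{\Gamma_N}u_N\vv_h\cdot\n$. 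Reading the resulting identity through the definitions of $a_h$ and $b_1$, and adding $\bm j_h(\u,\vv_h)$ to both sides, I obtain
\begin{equation*}
a_h(\u,\vv_h)+\bm j_h(\u,\vv_h)+b_1(\vv_h,p)=\int_\Omega\f\cdot\vv_h+\int_{\Gamma_D}p_D\,\vv_h\cdot\n+h^{-1}\int_{\Gamma_N}u_N\,\vv_h\cdot\n+\bm j_h(\u,\vv_h).
\end{equation*}
Subtracting the first line of~\eqref{cutfem:disc_pb} gives $a_h(\u-\u_h,\vv_h)+\bm j_h(\u-\u_h,\vv_h)+b_1(\vv_h,p-p_h)=\bm j_h(\u,\vv_h)$.

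For the mass-conservation part I test $\dive\u=g$ with $q_h\in Q_h$, then rearrange using the Neumann BC:
\begin{equation*}
b_1(\u,q_h)=\int_\Omega q_h\,\dive\u-\int_{\Gamma_N}q_h\,\u\cdot\n=\int_\Omega g q_h-\int_{\Gamma_N}u_N q_h.
\end{equation*}
Subtracting the second line of~\eqref{cutfem:disc_pb} yields $b_1(\u-\u_h,q_h)+j_h(p_h,q_h)=0$, which I rewrite as $b_1(\u-\u_h,q_h)-j_h(p-p_h,q_h)=-j_h(p,q_h)$ by linearity of $j_h$ in the first argument. Summing the two identities and recognizing $\mathcal A_h$ produces the claim.

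The only non-routine point worth flagging is well-posedness of the ghost penalty terms evaluated at the exact solution: jumps of $\partial_n^j\u$ and $\partial_n^j p$ across facets in $\mathcal F_h^\Gamma$ must make sense. This is granted because the Sobolev--Stein extensions $\bm E(\u)$, $E(p)$ constructed in Section~\ref{section3} are globally smooth on $\R^d$, so all interior jumps appearing in $\bm j_h(\u,\vv_h)$ and $j_h(p,q_h)$ actually vanish on the extended functions; in applying the identity one implicitly replaces $\u$, $p$ by their extensions, which is harmless because the volume and boundary integrals only see $\restr{\u}{\Omega}$ and $\restr{p}{\Omega}$. Apart from this small observation, the proof is a direct algebraic manipulation.
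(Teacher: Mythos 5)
Your proof is correct and is precisely the standard consistency computation the paper has in mind when it declares the proof trivial and omits it: test the strong equations against discrete functions, integrate by parts, insert the boundary conditions, subtract the discrete equations, and let the weakly consistent ghost penalty terms survive as the right-hand side. Your closing remark about interpreting $\bm j_h(\u,\cdot)$ and $j_h(p,\cdot)$ through the Sobolev--Stein extensions is the right reading of the (slightly informal) statement, since with only $\bm H(\dive;\Omega)\times L^2(\Omega)$ regularity these terms are not literally defined on facets of $\mathcal F_h^{\Gamma}$ reaching outside $\Omega$.
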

\begin{proof}
	The proof is trivial, hence we skip it.	
\end{proof}	
\begin{remark}
	We note that the formulation~\eqref{cutfem:disc_pb2} no longer fits into the framework of saddle-point problems. Hence, in order to study its stability, we will need to resort to the more general \emph{Banach-Ne\v cas-Babu\v ska Theorem}~\cite{MR2223503}. The case of pure Dirichlet boundary conditions will be covered separately in Section~\ref{section6}. Moreover, note that all the dimensionless parameters have been set for simplicity to $1$, unlike for the standard Nitsche method for the Poisson problem~\cite{MR2045519}, where the dimensionless parameter needs to be taken large enough.
\end{remark}	
We endow $V_h$ and $Q_h$ with the following mesh-dependent norms. 
\begin{alignat*}{3}
	\norm{\vv_h}^2_{0,h,\Omega_{\mathcal T}}: =&\norm{\vv_h}^2_{L^2(\Omega_{\mathcal T})} + \sum_{K\in\mathcal G_h(\Gamma_N)} h^{-1}\norm{\vv_h\cdot\n}^2_{L^2(\Gamma_K)},\qquad&& \vv_h\in V_h,\\%
	\norm{q_h}^2_{1,h,\Omega_{I}}: =& \sum_{K\in\mathcal T_h(\Omega_{I,h})}\norm{\nabla q_h}^2_{L^2(K)} + \sum_{f\in\mathcal F_h^i(\Omega_{I,h})} h^{-1}\norm{[q_h]}^2_{L^2(f)},\qquad&& q_h\in Q_h,\\
	\norm{q_h}^2_{1,h,\Omega_{\mathcal T}}:=& \sum_{K\in\mathcal T_h}\norm{\nabla q_h}^2_{L^2(K)} + \sum_{f\in\mathcal F_h^i} h^{-1}\norm{[q_h]}^2_{L^2(f\cap\Omega)} + \sum_{K\in\mathcal G_h(\Gamma_D)} h^{-1}\norm{q_h}^2_{L^2(\Gamma_K)}\quad&& q_h\in Q_h.
\end{alignat*}
The space $V_h\times Q_h$ is equipped with the product norm
\begin{align}\label{triple:norm}
	\vertiii{(\vv_h,q_h)}^2: = \norm{\vv_h}^2_{0,h,\Omega_{\mathcal T}} + \norm{q_h}^2_{1,h,\Omega_{\mathcal T}},\qquad (\vv_h,q_h)\in V_h\times Q_h.
\end{align}
Let us illustrate the salient properties of the ghost penalty operators that are needed to study the well-posedness of formulation~\eqref{cutfem:disc_pb}.
\begin{lemma}\label{semi_cs}
	The bilinear forms $\bm j_h(\cdot,\cdot)$ and $j_h(\cdot,\cdot)$ induce semi-inner products on $V_h$ and $Q_h$, respectively. In particular,	
	\begin{alignat*}{3}
		\bm j_h(\w_h,\vv_h) \le &	\bm j_h(\w_h,\w_h)^{\frac{1}{2}} \bm j_h(\vv_h,\vv_h)^{\frac{1}{2}},  \qquad&&\forall\ \w_h,\vv_h\in V_h,\\
		j_h(r_h,q_h) \le &	j_h(r_h,r_h)^{\frac{1}{2}} j_h(q_h,q_h)^{\frac{1}{2}}, \qquad&&\forall\ r_h,q_h\in Q_h.
	\end{alignat*}	
\end{lemma}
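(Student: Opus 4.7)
The plan is to verify the two defining properties of a semi-inner product, namely symmetry and positive semi-definiteness, and then to invoke the standard algebraic derivation of the Cauchy--Schwarz inequality from these properties.

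First I would observe that symmetry of both $\bm j_h(\cdot,\cdot)$ and $j_h(\cdot,\cdot)$ is immediate from the definitions in~\eqref{eq:ghost_penalty}, since for every facet $f \in \mathcal F_h^\Gamma$ and every $0 \le j \le k$ the integrand $[\partial_n^j \w_h]\,[\partial_n^j \vv_h]$ (respectively $[\partial_n^j r_h]\,[\partial_n^j q_h]$) is symmetric under exchange of arguments, and the weights $h^{2j+1}$ (respectively $h^{2j-1}$) are scalars. Bilinearity is also clear by linearity of the jump and of the normal derivative operator.

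Next I would plug $\w_h = \vv_h$ (respectively $r_h = q_h$) into the definition. Each summand becomes a positive weight times $\int_f [\partial_n^j \vv_h]^2 \ge 0$, and hence
\[
\bm j_h(\vv_h,\vv_h) = \sum_{f \in \mathcal F_h^\Gamma}\sum_{j=0}^k h^{2j+1} \int_f [\partial_n^j \vv_h]^2 \ge 0,
\qquad
j_h(q_h,q_h) \ge 0,
\]
so both forms are symmetric and positive semi-definite on the respective discrete spaces.

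Finally, I would apply the classical Cauchy--Schwarz argument: given a symmetric bilinear form $B$ with $B(v,v) \ge 0$, expanding $0 \le B(u+tv,u+tv) = B(u,u) + 2t B(u,v) + t^2 B(v,v)$ for all $t \in \R$ yields $B(u,v)^2 \le B(u,u)\,B(v,v)$ by optimizing in $t$ when $B(v,v) > 0$; in the degenerate case $B(v,v)=0$, the non-negativity of the quadratic in $t$ forces the linear coefficient $B(u,v)$ to vanish, so the inequality still holds trivially. Applying this with $B = \bm j_h$ and $B = j_h$ delivers the two asserted estimates. The argument is entirely elementary, and the only point worth flagging is that positive definiteness is not needed, positive semi-definiteness suffices, so no obstacle arises.
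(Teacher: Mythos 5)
Your proof is correct, but it takes a different route from the paper's. The paper proves the displayed inequalities by a concrete two-level Cauchy--Schwarz argument: first the $L^2$ Cauchy--Schwarz inequality on each facet integral $\int_f [\partial_n^j r_h][\partial_n^j q_h]$, then the discrete ($\ell^2$) Cauchy--Schwarz inequality over the double sum in $f$ and $j$, after splitting the weight as $h^{\frac{1}{2}(2j-1)}\cdot h^{\frac{1}{2}(2j-1)}$. You instead verify symmetry and positive semi-definiteness directly from the definitions in~\eqref{eq:ghost_penalty} and then invoke the abstract Cauchy--Schwarz inequality for semi-inner products via the non-negative quadratic $t \mapsto B(u+tv,u+tv)$, correctly handling the degenerate case $B(v,v)=0$. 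Both arguments are elementary and yield the same estimates; your version has the mild advantage of explicitly establishing the ``induce semi-inner products'' part of the lemma statement (which the paper's computation only addresses implicitly), while the paper's version makes the role of the mesh-dependent weights more visible. No gap either way.
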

\begin{proof}
	It suffices to apply the Cauchy-Schwarz inequality, first in the $L^2$-setting and then in the $\ell^2$-setting. Let us show, for instance, the bound for $j_h(\cdot,\cdot)$. Given $r_h,q_h\in Q_h$, it holds
	\begin{align*}
		j_h(r_h,q_h) =&\sum_{f\in\mathcal F_h^{\Gamma}} \sum_{j=0}^k h^{2j-1} \int_f [\partial_n^j r_h] [\partial_n^j q_h] \le \sum_{f\in\mathcal F_h^{\Gamma}} \sum_{j=0}^k h^{\frac{1}{2}(2j-1)} \norm{[\partial_n^j r_h]}_{L^2(f)}h^{\frac{1}{2}(2j-1)} \norm{[\partial_n^j q_h]}_{L^2(f)} \\
		\le & \left(\sum_{f\in\mathcal F_h^{\Gamma}} \sum_{j=0}^k h^{2j-1} \norm{[\partial_n^j r_h]}_{L^2(f)}^2 \right)^{\frac{1}{2}}  \left(\sum_{f\in\mathcal F_h^{\Gamma}} \sum_{j=0}^k h^{2j-1} \norm{[\partial_n^j q_h]}_{L^2(f)}^2 \right)^{\frac{1}{2}}\\
		& = 	j_h(r_h,r_h)^{\frac{1}{2}} j_h(q_h,q_h)^{\frac{1}{2}}.
	\end{align*}
	The inequality for $\bm j_h(\cdot,\cdot)$ follows in a similar fashion.
\end{proof}
\begin{lemma}\label{lemma:massing}
	Let $K_1, K_2 \in \mathcal T_h$ with $f= \partial K_1\cap \partial K_2$. Let $\varphi_h$ be a piecewise polynomial such that $\varphi_1:=\restr{\varphi_h}{K_1} \in\mathbb M_{k_1}(K_1)$ and $\varphi_2:=\restr{\varphi_h}{K_2}\in\mathbb M_{k_2}(K_2)$, and let $k:=\max\{k_1,k_2\}$. There exist $C_1,C_2>0$, independent of $h>0$, but dependent on the shape-regularity constant and on $k$, such that
	\begin{equation}\label{eq:esperanto1} \\
		\begin{aligned}
			\norm{\varphi_1}^2_{L^2(K_1)} \le C_1&\left( \norm{\varphi_2}^2_{L^2(K_2)} + \sum_{j=0}^k h^{2j+1} \norm{ [\partial_n^j \varphi_h]}^2_{L^2(f)} \right),\\
			\norm{\frac{\partial\varphi_1}{\partial x_j}}^2_{L^2(K_1)} \le C_2&\left( \norm{\frac{\partial\varphi_2}{\partial x_j}}^2_{L^2(K_2)} + \sum_{j=0}^k h^{2j-1} \norm{ [\partial_n^j \varphi_h]}^2_{L^2(f)} \right),\qquad \forall\ 1\le j \le d.
		\end{aligned}
	\end{equation}
\end{lemma}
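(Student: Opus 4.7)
The plan is to pass to a reference configuration and use the equivalence of norms on finite-dimensional polynomial spaces. Let $\tilde\varphi_2$ denote the unique polynomial extension of $\varphi_2$ from $K_2$ to all of $\mathbb{R}^d$, and split $\varphi_1 = (\varphi_1 - \tilde\varphi_2) + \tilde\varphi_2$ on $K_1$. The triangle inequality then reduces the first bound in \eqref{eq:esperanto1} to separately controlling $\|\tilde\varphi_2\|_{L^2(K_1)}$ and $\|\varphi_1 - \tilde\varphi_2\|_{L^2(K_1)}$. The first piece is handled by a \emph{polynomial extension inequality} $\|\tilde\varphi_2\|_{L^2(K_1)}\le C\|\varphi_2\|_{L^2(K_2)}$, which on the reference patch $\hat K_1 \cup \hat K_2$ is just the equivalence of $\|\cdot\|_{L^2(\hat K_1)}$ and $\|\cdot\|_{L^2(\hat K_2)}$ on the finite-dimensional space $\mathbb{M}_{k_2}(\hat K_2)$ (polynomially extended), transported back to the physical element through the affine diffeomorphism, with uniformity in $h$ guaranteed by shape-regularity and quasi-uniformity.

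For the second piece, set $\psi := \varphi_1 - \tilde\varphi_2$: this is a polynomial of degree at most $k$ whose normal-derivative traces on $f$ coincide exactly with the jumps, namely $\partial_n^j \psi |_f = [\partial_n^j \varphi_h]|_f$ for $j=0,\ldots,k$. On the reference element I need the inequality
\begin{equation*}
\|\hat\psi\|_{L^2(\hat K_1)}^2 \le C \sum_{j=0}^{k} \|\partial_n^j \hat\psi\|_{L^2(\hat f)}^2
\end{equation*}
for every polynomial $\hat\psi$ of degree at most $k$. Pulling back to the physical element via $\F_{K_1}$, using the scalings $\|\psi\|_{L^2(K_1)}^2 \sim h^d \|\hat\psi\|_{L^2(\hat K_1)}^2$ and $\|\partial_n^j \psi\|_{L^2(f)}^2 \sim h^{d-1-2j}\|\partial_n^j \hat\psi\|_{L^2(\hat f)}^2$, produces precisely the weights $h^{2j+1}$ of~\eqref{eq:esperanto1}, and combining with the polynomial extension bound closes the first inequality.

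The second inequality of~\eqref{eq:esperanto1} follows by the identical program applied to $\nabla\varphi_1 = \nabla(\varphi_1-\tilde\varphi_2) + \nabla\tilde\varphi_2$: the gradient polynomial extension inequality $\|\nabla\tilde\varphi_2\|_{L^2(K_1)} \lesssim \|\nabla\varphi_2\|_{L^2(K_2)}$ (valid on polynomials modulo constants, since both sides vanish there) and the reference bound $\|\nabla\hat\psi\|_{L^2(\hat K_1)}^2 \lesssim \sum_{j=0}^k \|\partial_n^j\hat\psi\|_{L^2(\hat f)}^2$ — a trivial consequence of the previous reference inequality composed with the finite-dimensional control $\|\nabla\hat\psi\|_{L^2(\hat K_1)} \lesssim \|\hat\psi\|_{H^1(\hat K_1)}$ — combine with the same affine pullback. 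The extra derivative in the scaling contributes the factor $h^{-2}$ that turns the weight $h^{2j+1}$ into $h^{2j-1}$.

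The main obstacle is verifying that the right-hand side of the reference inequality really is a norm, i.e.~that the map $\hat\psi \mapsto (\partial_n^0\hat\psi|_{\hat f},\dots,\partial_n^k\hat\psi|_{\hat f})$ is injective on polynomials of degree at most $k$. This I reduce to a Taylor expansion in local coordinates $(y,t)$ around an interior point of $\hat f$, with $t$ normal to $\hat f$: writing $\hat\psi(y,t)=\sum c_{\alpha,\ell}y^\alpha t^\ell$ and reading off $\partial_t^\ell \hat\psi(y,0)=\ell!\sum_\alpha c_{\alpha,\ell} y^\alpha$, the simultaneous vanishing of these polynomials in $y$ for every $\ell \le k$ forces every $c_{\alpha,\ell}$ to be zero. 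Once this injectivity is in hand the remainder is routine finite-dimensional norm equivalence, with the constants depending only on $k$ and on the shape-regularity parameter, as stated.
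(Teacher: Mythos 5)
Your argument is correct. Note, however, that the paper does not actually prove this lemma: it simply refers the reader to Lemma~5.1 of the reference cited as \cite{MR3268662} for the first inequality and Lemma~5.2 of \cite{MR3942178} for the second. Your write-up essentially reconstructs the standard proof found in those sources: extend $\varphi_2$ polynomially, observe that the normal-derivative traces of $\psi=\varphi_1-\tilde\varphi_2$ on $f$ are exactly the jumps, control $\|\tilde\varphi_2\|_{L^2(K_1)}$ by equivalence of $L^2$-norms of a fixed-degree polynomial on two shape-regular neighbouring elements, and control $\psi$ by a trace-type norm equivalence on the reference configuration whose validity rests on the injectivity of $\hat\psi\mapsto(\partial_n^j\hat\psi|_{\hat f})_{j=0}^{k}$; your Taylor-coefficient argument for that injectivity, and the scalings $h^{d}$, $h^{d-1-2j}$, $h^{d-2}$ producing the weights $h^{2j+1}$ and $h^{2j-1}$, are all in order (the cited proofs phrase the $\psi$-estimate as an explicit normal Taylor expansion rather than a finite-dimensional norm equivalence, but the two are interchangeable here). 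The one caveat worth recording is that for quadrilateral elements the maps $\F_K$ are bilinear rather than affine, so the elements of $\mathbb M_k(K)$ are mapped polynomials rather than polynomials and ``the unique polynomial extension of $\varphi_2$'' must be interpreted on the reference side (or the statement restricted to affinely mapped elements); this imprecision is inherited from the paper's own setting and does not affect the substance of your proof.
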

\begin{proof}
	The first inequality in~\eqref{eq:esperanto1} has been proven in Lemma~5.1 in~\cite{MR3268662}. For the second inequality, see Lemma~5.2 of~\cite{MR3942178}.
\end{proof}	

The following results enable us to control the norms in the whole $\Omega_\T$ in terms of the norms in the domain $\Omega_{I,h}$ through the ghost penalty operators.
\begin{theorem}\label{thm:massing}
	The following inequalities hold.
	\begin{alignat}{3}
		\norm{\vv_h}^2_{L^2(\Omega_\T)} \lesssim & \norm{\vv_h}^2_{L^2(\Omega_{I,h})} +  \bm j_h(\vv_h,\vv_h),\qquad&&\forall\ \vv_h\in V_h,\label{eq:polizei_velocity} \\
		\sum_{K\in\T_h}\norm{\nabla q_h}^2_{L^2(K)} \lesssim &	\sum_{K\in\T_h(\Omega_{I,h})}\norm{\nabla q_h}^2_{L^2(K)}  +  j_h(q_h,q_h),\qquad&&\forall\ q_h\in Q_h,\label{eq:polizei_pressures} \\
		\sum_{f\in \mathcal F_h^i}h^{-1}\norm{[q_h]}^2_{L^2(f)} \lesssim & 	\sum_{f\in\mathcal F_h^i(\Omega_{I,h})}h^{-1}\norm{[q_h]}^2_{L^2(f)}  +  j_h(q_h,q_h),\qquad&&\forall\ q_h\in Q_h.\label{eq:polizei}
	\end{alignat}	
\end{theorem}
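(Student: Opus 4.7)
The plan is to reduce all three inequalities to Lemma~\ref{lemma:massing} via the chain construction of Assumption~\ref{assumption:mesh}, together with a combinatorial bookkeeping argument. Take any $K \in \mathcal{T}_h \setminus \mathcal{T}_h(\Omega_{I,h})$; Assumption~\ref{assumption:mesh} produces a chain $K = K_1, K_2, \dots, K_M = K'$ of length $M \le N$ ending at an uncut element $K' \in \mathcal{T}_h(\Omega_{I,h})$, with each facet $f_i := K_i \cap K_{i+1}$ lying in $\mathcal{F}_h^\Gamma$. For~\eqref{eq:polizei_velocity}, I apply the first inequality of Lemma~\ref{lemma:massing} componentwise to $\vv_h$ across $f_1$, then across $f_2$, and so on, obtaining after at most $N-1$ iterations
\begin{align*}
  \norm{\vv_h}^2_{L^2(K)} \lesssim \norm{\vv_h}^2_{L^2(K')} + \sum_{i=1}^{M-1}\sum_{j=0}^k h^{2j+1}\norm{[\partial_n^j \vv_h]}^2_{L^2(f_i)}.
\end{align*}
For~\eqref{eq:polizei_pressures}, the same iterative scheme is used, but with the second inequality of Lemma~\ref{lemma:massing} applied to each partial derivative of $q_h$, producing contributions of the form $\sum_{j=0}^k h^{2j-1}\norm{[\partial_n^j q_h]}^2_{L^2(f_i)}$, which is exactly the summand of $j_h(q_h,q_h)$ associated to $f_i \in \mathcal{F}_h^\Gamma$.

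The first two inequalities then follow by summing these element-by-element estimates over $K \in \mathcal{T}_h \setminus \mathcal{T}_h(\Omega_{I,h})$, provided that (a) each facet $f \in \mathcal{F}_h^\Gamma$ appears in at most a uniformly bounded number of chains and (b) each uncut $K'$ is visited by a uniformly bounded number of chains. Both properties follow from shape-regularity combined with the uniform chain length $N$, since the cardinality of the set of elements at combinatorial distance at most $N$ from a fixed one is $O(1)$. The uncut contributions then collapse to $\norm{\vv_h}^2_{L^2(\Omega_{I,h})}$ (respectively $\sum_{K \in \mathcal{T}_h(\Omega_{I,h})}\norm{\nabla q_h}^2_{L^2(K)}$), while the ghost penalty contributions aggregate into a constant multiple of $\bm j_h(\vv_h,\vv_h)$ (respectively $j_h(q_h,q_h)$).

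For~\eqref{eq:polizei}, I split $\mathcal{F}_h^i = \mathcal{F}_h^i(\Omega_{I,h}) \cup \mathcal{F}_h^\Gamma$, possibly after merging into $\mathcal{F}_h^\Gamma$ any stray face adjacent only to purely exterior elements by a further iteration of Lemma~\ref{lemma:massing}. The contribution of $\mathcal{F}_h^i(\Omega_{I,h})$ is already on the right-hand side, and for any $f \in \mathcal{F}_h^\Gamma$ the quantity $h^{-1}\norm{[q_h]}^2_{L^2(f)}$ is precisely the $j=0$ summand of $j_h(q_h,q_h)$ and hence absorbed immediately. The main obstacle throughout is the combinatorial bookkeeping that guarantees every ghost-penalty facet is charged only a bounded number of times in the aggregate; this is a standard CutFEM argument, enabled precisely by the patch/chain structure encoded in Assumptions~\ref{assumption:mesh} and~\ref{assumption:patch}.
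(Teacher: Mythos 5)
Your proof is correct, and it is worth separating the two halves. For \eqref{eq:polizei_velocity} and \eqref{eq:polizei_pressures} you follow essentially the same route as the paper: iterate Lemma~\ref{lemma:massing} along the chains of Assumption~\ref{assumption:mesh} and control the overlap combinatorially; the paper compresses this into one sentence (citing shape-regularity and external references), whereas you spell out the bookkeeping (bounded multiplicity of facets and of target uncut elements), which is the honest content of that step. For \eqref{eq:polizei}, however, you take a genuinely different and considerably more elementary route: you observe that $\mathcal F_h^i\subseteq \mathcal F_h^i(\Omega_{I,h})\cup\mathcal F_h^\Gamma$ and that for $f\in\mathcal F_h^\Gamma$ the quantity $h^{-1}\norm{[q_h]}^2_{L^2(f)}$ is exactly the $j=0$ summand of $j_h(q_h,q_h)$, so the whole boundary-zone contribution is absorbed term by term with constant $1$. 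The paper instead replaces $j_h$ by the equivalent projection-based stabilizer $s_h$, maps each patch to a reference patch, splits $\hat q_h$ into its component in $\ker\hat s=\mathbb M_k(\hat P)$ plus an orthogonal remainder, and invokes a finite-dimensional norm equivalence. Your argument is shorter and sharper for the jump-based operator as actually stated in the theorem; what the paper's detour buys is a proof that works directly for $s_h$ (the operator used in the numerics), without relying on the equivalence of Remark~\ref{rmk:ghost_penalty}, which is asserted there but not proved. One small caveat: your hedge about ``stray faces adjacent only to purely exterior elements'' is not needed under the paper's standing convention that every element of $\T_h$ is either interior or cut (this is implicit in the identity $\Omega_\T=\Omega_{I,h}\cup\bigcup_{K\in\G_h}K$ used in the paper's own proof); if such exterior elements were retained, a further iteration of Lemma~\ref{lemma:massing} would not by itself control the jump seminorm across their faces, so it is better to state the convention than to appeal to the lemma there.
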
	
\begin{proof}
	Let us start with the proof of~\eqref{eq:polizei_velocity}. Since we can decompose $\Omega_\T = \Omega_{I,h} \cup \bigcup_{K\in\G_h} K$, it is sufficient to show
	\begin{align*}
		\sum_{K\in\G_h} \norm{\vv_h}^2_{L^2(K)}	\lesssim \norm{\vv_h}^2_{L^2(\Omega_{I,h})} + \bm j_h (\vv_h,\vv_h),\qquad&\forall\ \vv_h\in V_h,
	\end{align*}	
	which holds by Assumption~\ref{assumption:mesh}, the shape-regularity of $\T_h$, and Lemma~\ref{lemma:massing}. See also Lemma~2 of~\cite{MR3802428} and Proposition~5.1 of~\cite{MR3268662}. Inequality~\eqref{eq:polizei_pressures} for the pressures follows in a similar fashion. Let us move to~\eqref{eq:polizei}. Note that in view of Remark~\ref{rmk:ghost_penalty}, we may replace $j_h(\cdot,\cdot)$ with $s_h(\cdot,\cdot)$. Without loss of generality, let $f\in\mathcal F_h^i\setminus \mathcal F_h^i(\Omega_{I,h})$. Assumption~\ref{assumption:patch} guarantees the existence of a patch $\mathcal P_\ell$ such that $f\in \mathcal F_\ell$ and of an internal face $f'\in\mathcal F_\ell\cap \mathcal F_h^i(\Omega_{I,h})$. Let us take $q_h\in Q_h$ and map $P_\ell$ to the reference patch $\hat P$. We denote $\hat f:= F_\ell^{-1}(f)$, $\hat f':= F_\ell^{-1}(f')$, $\hat q_h:= q_h\circ F_\ell$, and $\hat \pi: L^2(\hat P)\to \mathbb M_k(\hat P)$ the $L^2$-orthogonal projection. Moreover, let $\hat {\mathcal F}:=\{f\in\mathcal F_h^i: f\subseteq \hat P , f\not\subset \partial P \}$. We also write
	\begin{align*}
		\hat s (\hat q_h, \hat q_h) := \int_{\hat P}\left(\hat q_h -\hat \pi \hat q_h\right) \hat q_h.
	\end{align*}
	It is sufficient to show that
	\begin{align*} 
		\norm{[\hat q_h]}^2_{L^2(\hat f)} \lesssim \norm{[\hat q_h]}^2_{L^2(\hat f')} + \hat s(\hat q_h,\hat q_h).
	\end{align*}	
	We decompose $\hat q_h = \hat q_1 + \hat q_2$, where $\hat q_1 \in \ker \hat s$ and $\hat q_2 \in \left(\ker \hat s \right)^\perp$, where the orthogonal complement is taken with respect to the $L^2$-scalar product on $\hat P$. Note that $\ker \hat s = \mathbb M_k (\hat P)$. We have, of course,
	\begin{align}\label{stereolab}
		\norm{[\hat q_1+\hat q_2]}^2_{L^2(f)} \le 	\norm{[\hat q_1+\hat q_2]}^2_{L^2(f)} + \norm{[\hat q_2]}^2_{L^2(f')}.
	\end{align}
	From norms equivalence on discrete spaces, it holds
	\begin{align}\label{otis}
		\sum_{f\in \hat{\mathcal F}} \norm{[\hat q_2]}^2_{L^2(f)} \lesssim \norm{\hat q_2}^2_{L^2(\hat P)}.	
	\end{align}	
	Indeed, it is easy to check that both terms in~\eqref{otis} are norms on $\left(\ker \hat s\right)^\perp$. In particular,~\eqref{otis} entails
	\begin{align}\label{otis2}
		\norm{[\hat q_2]}^2_{L^2(f)} \lesssim \norm{\hat q_2}^2_{L^2(\hat P)},\qquad\forall\ f\in\hat{\mathcal F}.	
	\end{align}
	By combining~\eqref{stereolab} and~\eqref{otis2}, we have $\norm{[\hat q_1+\hat q_2]}^2_{L^2(f)} \le \norm{[\hat q_1+\hat q_2]}^2_{L^2(f)} + \norm{\hat q_2}^2_{L^2(\hat P)}$.
	The reader can easily check $\hat s(\hat q_1+\hat q_2,\hat q_1 + \hat q_2) = \norm{\hat q_2}^2_{L^2(P)}$.
	Hence,
	\begin{align*}
		\norm{[\hat q_1+\hat q_2]}^2_{L^2(f)} \le \norm{[\hat q_1+\hat q_2]}^2_{L^2(f)} + \hat s(\hat q_1+\hat q_2,\hat q_1 + \hat q_2).
	\end{align*}
	The claim follows by scaling back to the physical patch $P_\ell$, summing over all the patches, using Assumption~\ref{assumption:patch}, and the shape-regularity of the mesh.
\end{proof}

\subsection{Stability estimates}
Let us prove the main ingredients that allow us to show the well-posedness of formulation~\eqref{cutfem:disc_pb2}.
\begin{proposition}\label{prop:continuity}
	The bilinear forms appearing in the weak formulation~\eqref{cutfem:disc_pb2} are continuous, namely, there exist $ M_{a},M_{b_1}, M_{\bm j},M_{j}>0$, such that
	\begin{alignat*}{3}
		\abs{a_h(\w_h,\vv_h)} \le & M_{a}  \norm{\w_h}_{0,h,\Omega_{\mathcal T}}\norm{\vv_h}_{0,h,\Omega_\T},\qquad&& \forall\ \w_h,\vv_h\in V_h, \\
		\abs{b_1(\vv_h,q_h)} \le & M_{b_1}  \norm{\vv_h}_{0,h,\Omega_{\mathcal T}}\norm{q_h}_{1,h,\Omega_\T},\qquad&& \forall\ \vv_h\in V_h,\forall\ q_h\in Q_h, \\
		\abs{\bm {j}_h(\w_h,\vv_h)}\le & M_{\bm j}  \norm{\w_h}_{0,h,\Omega_{\mathcal T}}\norm{\vv_h}_{0,h,\Omega_\T},\qquad &&\forall\ \w_h,\vv_h\in V_h, \\
		\abs{j_h(r_h,q_h)}\le & M_{j}  \norm{r_h}_{1,h,\Omega_{\mathcal T}}\norm{q_h}_{1,h,\Omega_\T},\qquad&& \forall\ r_h,q_h\in Q_h.
	\end{alignat*}
\end{proposition}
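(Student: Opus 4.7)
The plan is to handle the four inequalities by a common template: reduce each expression to a sum of products of Cauchy--Schwarz factors that match term-by-term the two mesh-dependent norms. The three easier cases are $a_h$, $\bm j_h$, and $j_h$. For $a_h$ I would split it into the bulk term $\int_\Omega \w_h\cdot\vv_h$ and the $\Gamma_N$ boundary term: the former is bounded after Cauchy--Schwarz by $\norm{\w_h}_{L^2(\Omega)}\norm{\vv_h}_{L^2(\Omega)}\le \norm{\w_h}_{L^2(\Omega_\T)}\norm{\vv_h}_{L^2(\Omega_\T)}$, while the latter, after a balanced split $h^{-1}=h^{-1/2}\cdot h^{-1/2}$ and a per-element Cauchy--Schwarz on $\Gamma_K$, is exactly the $\Gamma_N$ contribution in $\norm{\cdot}_{0,h,\Omega_\T}$. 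For the two ghost-penalty forms, I would first invoke Lemma~\ref{semi_cs} to reduce continuity to the self-estimates $\bm j_h(\vv_h,\vv_h)\lesssim \norm{\vv_h}^2_{0,h,\Omega_\T}$ and $j_h(q_h,q_h)\lesssim \norm{q_h}^2_{1,h,\Omega_\T}$, then combine the face trace inequality $\norm{\partial_n^j\varphi_h}_{L^2(f)}\lesssim h^{-1/2}\norm{\partial_n^j\varphi_h}_{L^2(K)}$ with the inverse inequality $\norm{\partial_n^j\varphi_h}_{L^2(K)}\lesssim h^{-j}\norm{\varphi_h}_{L^2(K)}$, so that the factors $h^{2j+1}$ and $h^{2j-1}$ absorb the derivatives and leave, respectively, an $L^2$-norm of $\vv_h$ and a gradient $L^2$-norm of $q_h$ on the elements adjacent to each ghost face (the $j=0$ contribution to $j_h$ being the jump term already built into $\norm{q_h}_{1,h,\Omega_\T}$).

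The key case is $b_1$. Since the norm on $V_h$ carries no divergence, a direct bound is impossible and an integration-by-parts step is necessary. Using that $\vv_h\in\bm H(\dive;\Omega_\T)$ has continuous normal trace across internal faces, element-wise integration by parts on each $K\cap\Omega$ recasts $b_1$ as
\begin{align*}
b_1(\vv_h,q_h) = -\sum_{K\in\mathcal T_h}\int_{K\cap\Omega}\nabla q_h\cdot\vv_h + \sum_{f\in\mathcal F_h^i}\int_{f\cap\Omega}[q_h]\vv_h\cdot\n + \int_{\Gamma_D} q_h\vv_h\cdot\n,
\end{align*}
the $\Gamma_N$ contribution from the integration by parts cancelling the explicit $\Gamma_N$ term in $b_1$. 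The volume integral is Cauchy--Schwarz-ed against $\bigl(\sum_K\norm{\nabla q_h}^2_{L^2(K)}\bigr)^{1/2}\norm{\vv_h}_{L^2(\Omega_\T)}$, and the face and $\Gamma_D$ contributions are split as $1=h^{-1/2}\cdot h^{1/2}$: the factors $h^{-1/2}\norm{[q_h]}_{L^2(f\cap\Omega)}$ and $h^{-1/2}\norm{q_h}_{L^2(\Gamma_K)}$ are recognized term-by-term in $\norm{q_h}_{1,h,\Omega_\T}$, while $h^{1/2}\norm{\vv_h\cdot\n}_{L^2(f)}$ and $h^{1/2}\norm{\vv_h\cdot\n}_{L^2(\Gamma_K)}$ are absorbed into $\norm{\vv_h}_{L^2(K)}$ by the standard face trace inequality for polynomials and by a cut-cell trace inequality on the embedded boundary $\Gamma_K$, respectively.

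The main obstacle is indeed this last ingredient: a trace inequality $h^{1/2}\norm{\vv_h\cdot\n}_{L^2(\Gamma_K)}\lesssim \norm{\vv_h}_{L^2(K)}$, uniform with respect to how $\Gamma_D$ cuts $K\in\mathcal G_h(\Gamma_D)$. It is the characteristic trace inequality of CutFEM, and its validity here is guaranteed by the geometric regularity of the cut encoded in Assumption~\ref{assumption:patch}.
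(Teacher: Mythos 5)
Your handling of $a_h$ and $b_1$ is exactly the paper's argument: the same element-wise integration by parts (with the $\Gamma_N$ term cancelling and the $\Gamma_D$ and interior-jump terms surviving), the same $h^{-1/2}\cdot h^{1/2}$ splitting, the standard inverse/trace estimate on interior faces, and the cut-cell trace inequality $h^{1/2}\norm{\vv_h\cdot\n}_{L^2(\Gamma_K)}\lesssim\norm{\vv_h}_{L^2(K)}$, which is precisely Lemma~\ref{lemma:disc_trace_ineq}. One small correction of attribution: that inequality is not a consequence of Assumption~\ref{assumption:patch}; it follows from the multiplicative trace inequality of Lemma~\ref{lemma:trace_ineq} (a property of the Lipschitz boundary $\Gamma$ and of shape regularity, uniform in the cut) combined with finite dimensionality and scaling. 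Likewise, reducing the ghost-penalty bounds to diagonal estimates via Lemma~\ref{semi_cs} and then absorbing the weights $h^{2j+1}$, $h^{2j-1}$ through facet-trace plus inverse inequalities is sound, and it disposes of $\bm j_h$ entirely and of the $j\ge 1$ terms of $j_h$.

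The step that does not hold as written is the parenthetical claim that the $j=0$ contribution to $j_h$ is ``the jump term already built into $\norm{q_h}_{1,h,\Omega_\T}$''. The two quantities are taken over different sets: $j_h$ contains $h^{-1}\norm{[q_h]}^2_{L^2(f)}$ over the \emph{entire} facet $f\in\mathcal F_h^{\Gamma}$, whereas the jump seminorm entering $\norm{\cdot}_{1,h,\Omega_\T}$ is $h^{-1}\norm{[q_h]}^2_{L^2(f\cap\Omega)}$. When $\abs{f\cap\Omega}$ is small relative to $\abs{f}$ these are not comparable with a cut-independent constant: the natural repair (subtract element means, control the mean-free parts by $\norm{\nabla q_h}_{L^2(K_i)}$ via Poincar\'e and trace, and recover the difference of the means from the jump on $f\cap\Omega$) leaves a factor $\abs{f}/\abs{f\cap\Omega}$ that degenerates for sliver cuts — exactly the configurations the ghost penalty is meant to handle. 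So this term requires a genuine separate argument (or an enlargement of the jump seminorm to the full facet). Be aware that the paper gives you no help here: its own proof dismisses the ghost-penalty continuity bounds with ``follow as well straightforward'', so on this point your write-up is no less complete than the original, but the gap is real and should be flagged rather than asserted away.
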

\begin{proof}
	Let us fix any $\w_h,\vv_h\in V_h$ and $r_h,q_h\in Q_h$. By Cauchy-Schwartz's inequality
	\begin{align*}
		\abs{a_h(\w_h,\vv_h)} \le & \norm{\w_h}_{L^2(\Omega)} \norm{\vv_h}_{L^2(\Omega)} + h^{-\frac{1}{2}}\norm{\w_h\cdot\n}_{L^2(\Gamma_N)} h^{-\frac{1}{2}}\norm{\vv_h\cdot\n}_{L^2(\Gamma_N)}\\
		\le & \norm{\w_h}_{0,h,\Omega_\T} \norm{\vv_h}_{0,h,\Omega_\T},
	\end{align*}	
	hence $M_a=1$. By integration by parts, we get
	\begin{align*}
		b_1(\vv_h,q_h) =& \int_\Omega q_h \dive \vv_h  - \int_{\Gamma_N} q_h \vv_h\cdot\n = \sum_{K\in\mathcal T_h} \int_{K\cap\Omega}q_h \dive \vv_h - \sum_{K\in\mathcal G_h(\Gamma_N)}\int_{\Gamma_K} q_h \vv_h\cdot\n\\
		=&-\sum_{K\in\mathcal T_h} \int_{K\cap\Omega}\nabla q_h\cdot \vv_h + \sum_{f\in\mathcal F_h^i}\int_{f\cap\Omega} \left[q_h \right]\vv_h\cdot\n + \sum_{K\in\mathcal G_h(\Gamma_D)}\int_{\Gamma_K}q_h \vv_h\cdot\n.
	\end{align*}
	From Cauchy-Schwarz's inequality, Lemma~\ref{lemma:disc_trace_ineq}, and a standard inverse estimate (Proposition~6.3.2 of~\cite{MR1299729}), we obtain
	\begin{align*}
		\sum_{f\in\mathcal F_h^i}\int_{f\cap\Omega} [q_h]\vv_h\cdot\n \le & \sum_{f\in\mathcal F_h^i} h^{-\frac{1}{2}} \norm{[q_h]}_{L^2(f\cap\Omega)} h^{\frac{1}{2}} \norm{\vv_h\cdot\n}_{L^2(f\cap\Omega)} \\
		\le &C\left(\sum_{f\in\mathcal F_h^i} h^{-1}\norm{[q_h]}^2_{L^2(f\cap\Omega)}\right)^{\frac{1}{2}}\left( \sum_{K\in\mathcal T_h} \norm{ \vv_h}^2_{L^2(K)}\right)^{\frac{1}{2}} \\
		\le & C \norm{q_h}_{1,h,\Omega_{\mathcal T}} \norm{\vv_h}_{0,h,\Omega_{\mathcal T}},
	\end{align*}
	and, analogously,
	\begin{align*}
		\sum_{K\in\mathcal G_h(\Gamma_D)}\int_{\Gamma_K} q_h\vv_h\cdot\n \le & \sum_{K\in\mathcal G_h(\Gamma_D)} h^{-\frac{1}{2}} \norm{q_h}_{L^2(\Gamma_K)} h^{\frac{1}{2}} \norm{\vv_h\cdot\n}_{L^2(\Gamma_K)} \\
		\le &C \left(\sum_{K\in\mathcal G_h(\Gamma_D)} h^{-1}\norm{q_h}^2_{L^2(\Gamma_K)}\right)^{\frac{1}{2}} \left(\sum_{K\in\mathcal T_h} \norm{ \vv_h}^2_{L^2(K)}\right)^{\frac{1}{2}} \\
		\le & C \norm{q_h}_{1,h,\Omega_{\mathcal T}} \norm{\vv_h}_{0,h,\Omega_{\mathcal T}}.
	\end{align*}
	Thus,
	\begin{align*}
		\abs{ b_1(\vv_h,q_h)}\le& \sum_{K\in\mathcal T_h}\norm{\nabla q_h}_{L^2(K\cap\Omega)}\norm{v_h}_{L^2(K\cap\Omega)} + C \norm{q_h}_{1,h,\Omega_{\mathcal T}} \norm{\vv_h}_{0,h,\Omega_{\mathcal T}}\\
		\le & C \norm{q_h}_{1,h,\Omega_{\mathcal T}} \norm{\vv_h}_{0,h,\Omega_{\mathcal T}} .
	\end{align*}
	The bounds for $\bm j_h(\cdot,\cdot)$ and $j_h(\cdot,\cdot)$ follow as well straightforward.
\end{proof}	
\begin{proposition}
	There exists $\alpha>0$, such that,
	\begin{align*}
		a_h(\vv_h,\vv_h) + \bm j_h(\vv_h,\vv_h) \ge \alpha \norm{\vv_h}^2_{0,h,\Omega_\T}, \qquad\forall\ \vv_h\in V_h.
	\end{align*}
\end{proposition}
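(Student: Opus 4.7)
The estimate is essentially a direct combination of the definition of $a_h(\cdot,\cdot)$, the definition of the norm $\norm{\cdot}_{0,h,\Omega_\T}$, and the ghost-penalty massing inequality~\eqref{eq:polizei_velocity} of Theorem~\ref{thm:massing}. Concretely, fix $\vv_h\in V_h$. By definition,
\[
a_h(\vv_h,\vv_h) + \bm j_h(\vv_h,\vv_h) = \norm{\vv_h}^2_{L^2(\Omega)} + h^{-1}\norm{\vv_h\cdot\n}^2_{L^2(\Gamma_N)} + \bm j_h(\vv_h,\vv_h),
\]
so I need to reconstruct the full norm $\norm{\vv_h}^2_{0,h,\Omega_\T} = \norm{\vv_h}^2_{L^2(\Omega_\T)} + \sum_{K\in\G_h(\Gamma_N)} h^{-1}\norm{\vv_h\cdot\n}^2_{L^2(\Gamma_K)}$ from the right-hand side.

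The boundary piece is immediate, since $\Gamma_N=\bigcup_{K\in\G_h(\Gamma_N)}\Gamma_K$ (disjoint up to a set of measure zero), and therefore
\[
h^{-1}\norm{\vv_h\cdot\n}^2_{L^2(\Gamma_N)} = \sum_{K\in\G_h(\Gamma_N)} h^{-1}\norm{\vv_h\cdot\n}^2_{L^2(\Gamma_K)}.
\]
For the volumetric piece, I would note that $\Omega_{I,h}\subset\Omega$ by definition, which gives $\norm{\vv_h}^2_{L^2(\Omega_{I,h})}\le \norm{\vv_h}^2_{L^2(\Omega)}$, and then invoke~\eqref{eq:polizei_velocity} to obtain a constant $C>0$ with
\[
\norm{\vv_h}^2_{L^2(\Omega_\T)} \le C\bigl(\norm{\vv_h}^2_{L^2(\Omega_{I,h})} + \bm j_h(\vv_h,\vv_h)\bigr) \le C\bigl(\norm{\vv_h}^2_{L^2(\Omega)} + \bm j_h(\vv_h,\vv_h)\bigr).
\]

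Combining these two observations and setting $\alpha:=\min\{1/C,1\}$ yields
\[
a_h(\vv_h,\vv_h)+\bm j_h(\vv_h,\vv_h) \ge \tfrac{1}{C}\norm{\vv_h}^2_{L^2(\Omega_\T)} + h^{-1}\norm{\vv_h\cdot\n}^2_{L^2(\Gamma_N)} \ge \alpha\,\norm{\vv_h}^2_{0,h,\Omega_\T},
\]
which is the desired coercivity. There is really no obstacle here beyond correctly unpacking the definitions; the only substantive ingredient is the control of the $L^2$-mass over the fictitious extension $\Omega_\T\setminus\Omega$ by the ghost penalty, which is exactly the content of~\eqref{eq:polizei_velocity} and has already been established via Assumption~\ref{assumption:mesh} and Lemma~\ref{lemma:massing}.
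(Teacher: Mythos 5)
Your proof is correct and follows exactly the route the paper intends: the paper's own proof is the one-line remark that the coercivity ``is just a consequence of Theorem~\ref{thm:massing}'', and your argument is precisely the expansion of that remark, using the decomposition $\Gamma_N=\bigcup_{K\in\mathcal G_h(\Gamma_N)}\Gamma_K$, the inclusion $\Omega_{I,h}\subset\Omega$, and inequality~\eqref{eq:polizei_velocity} to recover the $L^2(\Omega_\T)$-mass from $\norm{\vv_h}^2_{L^2(\Omega)}+\bm j_h(\vv_h,\vv_h)$. No gaps; the details you supply are exactly the ones the paper leaves implicit.
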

\begin{proof}
	This is just a consequence of Theorem~\ref{thm:massing}.
\end{proof}
\begin{proposition}\label{prop:small_infsup}
	There exists $\beta_1>0$ such that
	\begin{align*}
		\inf_{q_h\in Q_h} \sup_{\vv_h\in V_h} \frac{b_1(\vv_h,q_h)}{\norm{\vv_h}_{0,h,\Omega_\T}} \ge \beta_1 \norm{q_h}_{1,h,\Omega_{I,h}}.
	\end{align*}
\end{proposition}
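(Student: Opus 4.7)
The plan is to prove the inf-sup condition by a direct Fortin-type construction: given $q_h \in Q_h$, I build an explicit $\vv_h \in V_h$ that vanishes identically on every cut element and whose remaining Raviart-Thomas degrees of freedom encode $-\nabla q_h$ on uncut elements and $h^{-1}[q_h]$ on the interior faces entirely contained in $\Omega_{I,h}$. Once the two inequalities $b_1(\vv_h,q_h) \ge \norm{q_h}^2_{1,h,\Omega_{I,h}}$ and $\norm{\vv_h}_{0,h,\Omega_\T} \lesssim \norm{q_h}_{1,h,\Omega_{I,h}}$ are in hand, the claim follows upon forming the quotient.

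The construction relies on the observation that $[q_h]|_f \in \Psi_k(f)$ for every internal face $f$ and $\nabla q_h|_K \in \Psi_k(K)$ for every $K \in \mathcal T_h$, by direct inspection of the polynomial spaces introduced in Section~\ref{section3} (with the interior piece simply empty when $k=0$). I specify $\vv_h$ through its degrees of freedom \eqref{dofs_velocity} by prescribing
\begin{align*}
\int_f \vv_h\cdot\n\,\hat q = h^{-1}\int_f [q_h]\hat q,\qquad \forall\ \hat q \in \Psi_k(f),\ \forall\ f \in \mathcal F_h^i(\Omega_{I,h}),
\end{align*}
\begin{align*}
\int_K \vv_h\cdot\w = -\int_K \nabla q_h\cdot\w,\qquad \forall\ \w \in \Psi_k(K),\ \forall\ K \in \mathcal T_h(\Omega_{I,h}),
\end{align*}
and by setting every other face or interior moment to zero. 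Unisolvence of the RT degrees of freedom on each element then forces $\vv_h \equiv 0$ on every $K \in \mathcal G_h$ (so that the Dirichlet boundary integrals and the Neumann normal-trace contributions drop out) and $\vv_h \cdot \n \equiv 0$ on every face outside $\mathcal F_h^i(\Omega_{I,h})$. Running the elementwise integration by parts already carried out in the proof of Proposition~\ref{prop:continuity} and testing the defining identities above with $\w=\nabla q_h$ and $\hat q=[q_h]$ produce directly
\begin{align*}
b_1(\vv_h,q_h) = \sum_{K\in\mathcal T_h(\Omega_{I,h})}\norm{\nabla q_h}^2_{L^2(K)} + \sum_{f\in\mathcal F_h^i(\Omega_{I,h})} h^{-1}\norm{[q_h]}^2_{L^2(f)} = \norm{q_h}^2_{1,h,\Omega_{I,h}}.
\end{align*}

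For the norm control of the test function, the Neumann piece of $\norm{\vv_h}_{0,h,\Omega_\T}$ is zero since $\vv_h \equiv 0$ on every $K\in\mathcal G_h(\Gamma_N)$. For the $L^2$-bulk, on each $K\in\mathcal T_h(\Omega_{I,h})$ I pass to the reference element via the Piola transform $\mathcal P_K$ and exploit the unisolvence of~\eqref{dofs_velocity} together with norm equivalence on the finite-dimensional space $\mathbb{RT}_k(\hat K)$ to obtain the standard local scaling
\begin{align*}
\norm{\vv_h}^2_{L^2(K)} \lesssim h \sum_{f\subset\partial K}\norm{\vv_h\cdot\n}^2_{L^2(f)} + \norm{\pi_K^\Psi \vv_h}^2_{L^2(K)},
\end{align*}
where $\pi_K^\Psi$ denotes the $L^2(K)$-orthogonal projection onto $\Psi_k(K)$; by construction $\pi_K^\Psi \vv_h = -\nabla q_h$, and plugging in the prescribed DOF values gives $\norm{\vv_h}^2_{L^2(K)} \lesssim \sum_{f\subset\partial K} h^{-1}\norm{[q_h]}^2_{L^2(f)} + \norm{\nabla q_h}^2_{L^2(K)}$. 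Summing over $K\in\mathcal T_h(\Omega_{I,h})$, with each face counted a uniformly bounded number of times thanks to shape-regularity, and recalling that $\vv_h$ vanishes on $\mathcal G_h$, yields $\norm{\vv_h}^2_{L^2(\Omega_\T)} \lesssim \norm{q_h}^2_{1,h,\Omega_{I,h}}$. The main technical point is the reference-element scaling displayed above, which requires careful Piola bookkeeping for the RT degrees of freedom; everything else follows directly from the explicit choice of DOFs.
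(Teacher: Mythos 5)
Your construction is exactly the one used in the paper: the same prescription of the Raviart--Thomas degrees of freedom ($h^{-1}[q_h]$ on faces of $\mathcal F_h^i(\Omega_{I,h})$, $-\nabla q_h$ for the interior moments on uncut elements, zero elsewhere), the same extension by zero onto the cut elements, the same elementwise integration by parts giving $b_1(\vv_h,q_h)=\norm{q_h}^2_{1,h,\Omega_{I,h}}$, and the same reference-element scaling argument bounding $\norm{\vv_h}_{L^2(K)}$ by the face and interior moments. The proof is correct and essentially identical to the paper's.
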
	
\begin{proof}
	Let us fix $q_h\in Q_h$ and construct $\vv_h\in V_h$ using just the internal degrees of freedom of $V_h$, namely
	\begin{align*}
		\int_f \vv_h\cdot \n \varphi_h  = h^{-1 }\int_f [q_h] \varphi_h,\qquad&\forall\ f\in \mathcal F_h^i(\Omega_{I}), \forall\ \varphi_h\in\Psi_k(f) , \\
		\int_f \vv_h\cdot \n \varphi_h  = 0,\qquad&\forall\ f\in \mathcal F_h^\partial(\Omega_{I}), \forall\ \varphi_h\in \Psi_k(f),  \\
		\int_K \vv_h\cdot \bm \psi_h = -\int_K\nabla q_h\cdot\bm \psi_h, \qquad &\forall K\in\mathcal T_h(\Omega_{I}),\forall\ \bm\psi_h \in \Psi_k(K),  \qquad\text{if}\ k>0.
	\end{align*}
	We refer the reader to Section~\ref{section2} for the definitions of $\Psi_k(K)$ and $\Psi_k(f)$.	Let us extend $\vv_h$ to zero outside $\Omega_{I,h}$. It holds
	\begin{align*}
		b_1(\vv_h,q_h) =& -\sum_{K\in\mathcal T_h(\Omega_{I,h})} \int_K \nabla q_h\cdot \vv_h + \sum_{f\in\mathcal F_h^i(\Omega_{I,h})} \int_f [q_h]\vv_h\cdot\n\\
		=& \sum_{K\in\mathcal T_h(\Omega_{I,h})}\norm{\nabla q_h}^2_{L^2(K)} + \sum_{f\in\mathcal F_h^i(\Omega_{I,h})} h^{-1} \norm{[q_h]}^2_{L^2(f)} = \norm{q_h}^2_{1,h,\Omega_{I,h}}.
	\end{align*}
	Now, we prove that $\norm{\vv_h}_{0,h,\Omega_\T}\lesssim\norm{q_h}_{1,h,\Omega_{I,h}}$. By construction of $\vv_h$, it is sufficient to show
	\begin{align*}
		\sum_{K\in\mathcal T_h(\Omega_{I,h})} \norm{\vv_h}^2_{L^2(K)} \lesssim \sum_{K\in\mathcal T_h(\Omega_{I,h})}  \norm{\nabla q_h}^2_{L^2(K)} + \sum_{f\in\mathcal F_h^i(\Omega_{I,h})} h^{-1} \norm{[q_h]}^2_{L^2(f)}.
	\end{align*}	
	We mimic the proof of Proposition~2.1 in~\cite{MR3885757}.
	Let $K\in\T_h(\Omega_{I,h})$ and $\hat K$ be the the reference element. Let $f$ be a face of $K$ and $\hat f$ be its preimage through $\F_K$ (see Section~\ref{section2} for the definitions of $\hat K$ and $\F_K$). Finite dimensionality implies
	\begin{align*}
		\norm{\hat \vv_h}^2_{L^2(\hat K)} \lesssim \norm{\pi_{\hat K,k-1}\hat \vv_h}^2_{L^2(\hat K)} + \norm{\hat \vv_h\cdot\n}^2_{L^2(\hat f)},	
	\end{align*}	
	where $\pi_{\hat K,k-1}$ is the $L^2$-projection onto $\Psi_k(\hat K)$. By pushing forward to the element $K$, a scaling argument implies
	\begin{align*}
		\norm{ \vv_h}^2_{L^2( K)} \lesssim \norm{\pi_{ K,k-1} \vv_h}^2_{L^2( K)} + h\norm{ \vv_h\cdot\n}^2_{L^2( f)}.
	\end{align*}
	This time $\pi_{ K,k-1}$ is the $L^2$-projection onto $\Psi_k(K)$
	By construction of $\vv_h$,
	\begin{align*}
		\norm{ \vv_h}^2_{L^2( K)} \lesssim \norm{\pi_{ K,k-1}\nabla q_h}^2_{L^2( K)} + h^{-1}\norm{ \pi_{f,k} [q_h]}^2_{L^2( f)} 
		=  \norm{\nabla q_h}^2_{L^2( K)} + h^{-1}\norm{ [q_h]}^2_{L^2( f)},
	\end{align*}
	where $\pi_{f,k}$ denotes the $L^2$-projection onto $\Psi_k(f)$.
\end{proof}	

We are left with the proof of the well-posedness of formulation~\eqref{cutfem:disc_pb2}. In order to do that, we verify that the bilinear form $\mathcal A_h(\cdot;\cdot)$ satisfies the hypotheses of the Banach-Ne\v cas-Babu\v ska Theorem.
\begin{theorem}\label{thm:cutfem_stability}
	There exists $\eta>0$ such that	
	\begin{align*}
		\inf_{(\vv_h,q_h)\in V_h\times Q_h} \sup_{(\w_h,r_h)\in V_h\times Q_h} \frac{\mathcal A_h\left( (\vv_h,q_h)  ;(\w_h,r_h) \right)}{\vertiii{(\vv_h,q_h)} \vertiii{(\w_h,r_h)} }\ge\eta.
	\end{align*}
\end{theorem}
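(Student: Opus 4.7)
The plan is to verify the Banach-Ne\v cas-Babu\v ska inf-sup by constructing, for each $(\vv_h,q_h)\in V_h\times Q_h$, a test pair $(\w_h,r_h)\in V_h\times Q_h$ with $\mathcal A_h((\vv_h,q_h);(\w_h,r_h))\gtrsim\vertiii{(\vv_h,q_h)}^2$ and $\vertiii{(\w_h,r_h)}\lesssim\vertiii{(\vv_h,q_h)}$. The test is built as a linear combination of three ingredients, each probing a different part of the triple norm.

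First, testing with $(\vv_h,-q_h)$ cancels the cross terms in $b_1$ and activates the previous coercivity result, yielding
\[
\mathcal A_h((\vv_h,q_h);(\vv_h,-q_h))=a_h(\vv_h,\vv_h)+\bm j_h(\vv_h,\vv_h)+j_h(q_h,q_h)\ge\alpha\|\vv_h\|^2_{0,h,\Omega_\T}+j_h(q_h,q_h).
\]
Second, let $\w_h^*\in V_h$ be the velocity built in Proposition \ref{prop:small_infsup} (after suitable rescaling), so that $b_1(\w_h^*,q_h)\ge\beta_1\|q_h\|^2_{1,h,\Omega_{I,h}}$ and $\|\w_h^*\|_{0,h,\Omega_\T}\le\|q_h\|_{1,h,\Omega_{I,h}}$. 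Test with $(\delta\w_h^*,0)$, bound the unwanted terms $\delta(a_h+\bm j_h)(\vv_h,\w_h^*)$ via the continuity estimates of Proposition \ref{prop:continuity} and Young's inequality, and choose $\delta>0$ sufficiently small (independently of $h$) to absorb them into the $\alpha\|\vv_h\|^2_{0,h,\Omega_\T}$ term. Combining with the first test yields the provisional bound
\[
\mathcal A_h((\vv_h,q_h);(\vv_h+\delta\w_h^*,-q_h))\gtrsim\|\vv_h\|^2_{0,h,\Omega_\T}+j_h(q_h,q_h)+\|q_h\|^2_{1,h,\Omega_{I,h}}.
\]

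By Theorem \ref{thm:massing}, the inequalities \eqref{eq:polizei_pressures} and \eqref{eq:polizei} together with $\|[q_h]\|_{L^2(f\cap\Omega)}\le\|[q_h]\|_{L^2(f)}$ show that the broken-gradient and interior-jump contributions to $\|q_h\|^2_{1,h,\Omega_\T}$ are dominated by $\|q_h\|^2_{1,h,\Omega_{I,h}}+j_h(q_h,q_h)$. However, the Dirichlet boundary contribution $\sum_{K\in\mathcal G_h(\Gamma_D)}h^{-1}\|q_h\|^2_{L^2(\Gamma_K)}$ remains uncontrolled by the previous steps; the choice $q_h\equiv 1$ makes the right-hand side of the provisional bound vanish while this penalty does not, so a third ingredient is unavoidable. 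To recover it I would introduce $\w_h^{**}\in V_h$, supported on the patches $\mathcal P_\ell$ that meet $\Gamma_D$, whose internal degrees of freedom are chosen so that, after integration by parts, $b_1(\w_h^{**},q_h)$ reproduces $\sum_{K\in\mathcal G_h(\Gamma_D)}h^{-1}\|q_h\|^2_{L^2(\Gamma_K)}$ up to terms absorbable by $\|q_h\|^2_{1,h,\Omega_{I,h}}$ and $j_h(q_h,q_h)$. The concrete construction goes through the polynomial extension $\mathcal E_\ell q_h$ of Remark \ref{rmk:ghost_penalty}: the trace of $\mathcal E_\ell q_h$ on $\Gamma_K$ is related to its trace on the uncut element $K'\in\mathcal P_\ell$ by norm equivalence on the reference patch, while the residual $q_h-\mathcal E_\ell q_h$ is bounded by $j_h(q_h,q_h)$ at the appropriate scaling.

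The main obstacle is precisely this third ingredient: one must exhibit a discrete velocity that, through $b_1$, excites the Dirichlet boundary term while keeping its $\|\cdot\|_{0,h,\Omega_\T}$-norm proportional to $\bigl(\sum_{K\in\mathcal G_h(\Gamma_D)}h^{-1}\|q_h\|^2_{L^2(\Gamma_K)}\bigr)^{1/2}$, and simultaneously argue that the broken-gradient and jump terms produced by integration by parts against $q_h$ are controlled by the already-extracted portions of the triple norm. Once this is done, the test $(\w_h,r_h)=(\vv_h+\delta\w_h^*+\varepsilon\w_h^{**},-q_h)$ satisfies $\vertiii{(\w_h,r_h)}\lesssim\vertiii{(\vv_h,q_h)}$ by construction, and the BNB theorem delivers the claimed inf-sup with some $\eta>0$ independent of $h$ and of the mutual position of mesh and physical domain.
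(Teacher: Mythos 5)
Your first two ingredients coincide with the paper's proof: the paper tests with $(\vv_h,-q_h)$ to extract $\norm{\vv_h}^2_{0,h,\Omega_\T}+j_h(q_h,q_h)$, then with a small multiple of the velocity from Proposition~\ref{prop:small_infsup} to extract $\norm{q_h}^2_{1,h,\Omega_{I,h}}$ after a Young absorption, exactly as you do up to sign conventions. Where you diverge is the final step: the paper introduces no third test function, but instead concludes by invoking the inequality $\norm{q_h}^2_{1,h,\Omega_\T}\lesssim\norm{q_h}^2_{1,h,\Omega_{I,h}}+j_h(q_h,q_h)$, attributed to Theorem~\ref{thm:massing}. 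You are right to be suspicious of that step: estimates~\eqref{eq:polizei_pressures} and~\eqref{eq:polizei} only cover the broken gradient and the interior jumps, while the penalty $\sum_{K\in\mathcal G_h(\Gamma_D)}h^{-1}\norm{q_h}^2_{L^2(\Gamma_K)}$ is of a different nature, and your observation that $q_h\equiv 1$ (which belongs to $Q_h$ in the mixed case, since the zero-average constraint is imposed only for pure Neumann conditions) annihilates the right-hand side but not this term shows the asserted inequality cannot hold verbatim. You have therefore correctly located the one point of the argument that the paper leaves unjustified.

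However, your proposal does not close that gap either, because the third ingredient --- which you yourself label the main obstacle --- is only announced. You do not exhibit $\w_h^{**}$, nor verify that $b_1(\w_h^{**},q_h)$ reproduces $\sum_{K\in\mathcal G_h(\Gamma_D)}h^{-1}\norm{q_h}^2_{L^2(\Gamma_K)}$ up to absorbable terms, nor prove the bound $\norm{\w_h^{**}}_{0,h,\Omega_\T}\lesssim\bigl(\sum_{K\in\mathcal G_h(\Gamma_D)}h^{-1}\norm{q_h}^2_{L^2(\Gamma_K)}\bigr)^{1/2}$. The route through $\mathcal E_\ell q_h$ is plausible in terms of scaling (it is the continuity estimate of Proposition~\ref{prop:continuity} read backwards), but the Raviart--Thomas degrees of freedom are moments on full mesh facets and element interiors, not on the cut interface $\Gamma_K$, and $\n$ there is the normal to $\Gamma_D$ rather than to a facet; realizing the required normal trace on $\Gamma_D$ while simultaneously controlling $\norm{\w_h^{**}}_{L^2(K)}$ on the cut elements is exactly the hard part and it is missing. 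As written, your argument establishes the inf-sup only for the seminorm $\bigl(\norm{\vv_h}^2_{0,h,\Omega_\T}+\norm{q_h}^2_{1,h,\Omega_{I,h}}+j_h(q_h,q_h)\bigr)^{1/2}$ rather than for the full norm $\vertiii{(\vv_h,q_h)}$, so the proof is incomplete at precisely the point you flag.
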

\begin{proof}
	Let $\left(\vv_h,q_h\right)\in V_h\times Q_h$ be arbitrary and $-\w_h$ be the element attaining the supremum in Proposition~\ref{prop:small_infsup}, namely
	\begin{align}
		-b_1(\w_h,q_h) = \norm{q_h}^2_{1,h,\Omega_{I,h}},\qquad
		\norm{\w_h}_{0,h,\Omega_\T} \lesssim \norm{q_h}_{1,h,\Omega_{I,h}}.\label{cutfem:eq4}
	\end{align}	
	We have
	\begin{align*}
		\mathcal A_h \left( (\vv_h,q_h);(-\w_h,0)\right) =& -a_h(\vv_h,\w_h) - \bm{j}_h(\vv_h,\w_h) - b_1(\w_h,q_h) + b_1(\vv_h,0) -j_h(q_h,0) \\
		\ge & -2 \norm{\vv_h}_{0,h,\Omega_\T} \norm{\w_h}_{0,h,\Omega_\T} + \norm{q_h}^2_{1,h,\Omega_{I,h}} + 0 + 0 \\
		\gtrsim & -2 \norm{\vv_h}_{0,h,\Omega_\T} \norm{q_h}_{1,h,\Omega_{I,h}}  + \norm{q_h}^2_{1,h,\Omega_{I,h}} \\
		\gtrsim &  -\frac{1}{\eps} \norm{\vv_h}^2_{0,h,\Omega_\T} + (1-\eps) \norm{q_h}^2_{1,h,\Omega_{I,h}},
	\end{align*}
	where $\eps>0$ arises from the Young inequality. On the other hand, it holds
	\begin{align*}
		\mathcal A_h \left( (\vv_h,q_h);(\vv_h,-q_h)\right) =& a_h(\vv_h,\vv_h) + \bm{j}_h(\vv_h,\vv_h) + b_1(\vv_h,q_h) - b_1(\vv_h,q_h) +j_h(q_h,q_h) \\
		\gtrsim & \norm{\vv_h}^2_{0,h,\Omega_\T} +j_h(q_h,q_h).
	\end{align*}
	By choosing $\left(\vv_h-\delta \w_h ,-q_h\right)\in V_h\times Q_h$, for $\delta>0$ to be set later on, we get
	\begin{align*}
		\mathcal A_h \left( (\vv_h,q_h);(\vv_h-\delta \w_h ,-q_h)\right) 	\gtrsim& \left(1- \frac{\delta}{\eps} \right) \norm{\vv_h}^2_{0,h,\Omega_\T} +\delta(1-\eps) \norm{q_h}^2_{1,h,\Omega_{I,h}} +j_h(q_h,q_h).
	\end{align*}	
	Let us take, for instance, $\eps = \frac{1}{2}$ and any $0<\delta<\frac{1}{2}$, so that
	\begin{align*}
		\mathcal A_h \left( (\vv_h,q_h);(\vv_h-\delta \w_h ,-q_h)\right) 	\gtrsim&  \norm{\vv_h}^2_{0,h,\Omega_\T} +\norm{q_h}^2_{1,h,\Omega_{I,h}} + j_h(q_h,q_h)\\
		\gtrsim &  \norm{\vv_h}^2_{0,h,\Omega_\T} +\norm{q_h}^2_{1,h,\Omega_\T},
	\end{align*}	
	where in the last inequality we used $\norm{q_h}^2_{1,h,\Omega_\T}\lesssim \norm{q_h}^2_{1,h,\Omega_{I,h}} + j_h(q_h,q_h)$, which follows from Theorem~\ref{thm:massing}. We are left with proving that $\vertiii{(\vv_h-\delta \w_h, q_h)} \lesssim \vertiii{(\vv_h,q_h)}$, which is a consequence of~\eqref{cutfem:eq4}.
\end{proof}
\subsection{\emph{A priori} error estimates}
\begin{theorem}\label{cutfem:thm_approx}
	There exists $C>0$ such that, for every $\left(\vv,q\right)\in  \bm H^{t}(\Omega)\times H^{r}(\Omega)$, $t\ge 1$, $r\ge 1$, it holds
	\begin{equation*}	
		\begin{aligned}	
			\norm{\bm E(\vv)- r_{h}\vv}_{0,h,\Omega_\T}+ \norm{ E(q)-\Pi_h q}_{1,h,\Omega_\T} \le C h^{s}\left(\norm{\vv}_{H^{t}(\Omega)}+\norm{q}_{H^{r}(\Omega)} \right),
		\end{aligned}	
	\end{equation*}	
	where $s:=\min\{t-1,r-1,k\}$, $\bm E$ and $E$ have been introduced in Section~\ref{section3}.
\end{theorem}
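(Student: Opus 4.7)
The plan is to reduce the estimate to classical elementwise interpolation bounds on the fictitious mesh and then to use a trace inequality valid on cut elements to absorb the boundary- and jump-contributions appearing in the two mesh-dependent norms.

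Set $\tilde{\vv}:=\bm E(\vv)$ and $\tilde q:=E(q)$. By construction $r_h\vv = r_h^\T(\tilde{\vv}|_{\Omega_\T})$ and $\Pi_h q = \Pi_h^\T(\tilde q|_{\Omega_\T})$, and the Sobolev--Stein extensions satisfy $\norm{\tilde{\vv}}_{H^t(\Omega_\T)}\lesssim \norm{\vv}_{H^t(\Omega)}$, $\norm{\tilde q}_{H^r(\Omega_\T)}\lesssim \norm{q}_{H^r(\Omega)}$. It is therefore enough to prove $\norm{\tilde{\vv}-r_h^\T \tilde{\vv}}_{0,h,\Omega_\T}\lesssim h^s \norm{\tilde{\vv}}_{H^t(\Omega_\T)}$ and $\norm{\tilde q -\Pi_h^\T\tilde q}_{1,h,\Omega_\T}\lesssim h^s \norm{\tilde q}_{H^r(\Omega_\T)}$. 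On each $K\in\T_h$ the classical local estimates, obtained through the Piola transform, scaling, and the Bramble--Hilbert lemma, read
\begin{align*}
\norm{\tilde{\vv}- r_K \tilde{\vv}}_{L^2(K)} + h_K\norm{\nabla(\tilde{\vv}- r_K \tilde{\vv})}_{L^2(K)} &\lesssim h_K^{\min(t,k+1)} |\tilde{\vv}|_{H^t(K)}, \\
\norm{\tilde q- \Pi_K \tilde q}_{L^2(K)} + h_K\norm{\nabla(\tilde q- \Pi_K \tilde q)}_{L^2(K)} &\lesssim h_K^{\min(r,k+1)} |\tilde q|_{H^r(K)}.
\end{align*}
Summing over $K\in\T_h$ already controls the $L^2(\Omega_\T)$ piece of $\norm{\cdot}_{0,h,\Omega_\T}$ and the $\sum_K\norm{\nabla\cdot}^2_{L^2(K)}$ piece of $\norm{\cdot}_{1,h,\Omega_\T}$ by $h^{2s}$ times the appropriate Sobolev norm.

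For the remaining mesh-dependent contributions I would rely on the trace inequality
\begin{align*}
\norm{w}^2_{L^2(\Sigma_K)} \lesssim h_K^{-1}\norm{w}^2_{L^2(K)} + h_K\norm{\nabla w}^2_{L^2(K)},\qquad w\in H^1(K),
\end{align*}
valid for $\Sigma_K\subset\partial K$ and for $\Sigma_K=\Gamma_K$, with a constant independent of how $\Gamma$ intersects $K$. Applied with $w=(\tilde{\vv}-r_K\tilde{\vv})\cdot\n$ on $\Gamma_K$ for $K\in\mathcal G_h(\Gamma_N)$ and with $w=\tilde q-\Pi_K\tilde q$ on $\Gamma_K$ for $K\in\mathcal G_h(\Gamma_D)$, it yields the $\mathcal O(h^{s})$ bound for the associated boundary pieces of the two mesh-dependent norms. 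For the jump term $\sum_{f} h^{-1}\norm{[\tilde q-\Pi_h^\T \tilde q]}^2_{L^2(f\cap\Omega)}$, the single-valuedness of $\tilde q$ on $f=\partial K_1\cap\partial K_2$ gives $[\tilde q-\Pi_h^\T\tilde q]_f = (\tilde q-\Pi_{K_1}\tilde q)|_f - (\tilde q-\Pi_{K_2}\tilde q)|_f$, so the same trace inequality on each side of $f$ closes the argument.

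The main obstacle is precisely the validity of the trace inequality on the cut portion $\Gamma_K$ with a constant that is uniform in the mesh-boundary intersection. This is a standard CutFEM requirement, compatible with the shape-regularity of $\T_h$ together with Assumptions~\ref{assumption:mesh}--\ref{assumption:patch}. Once it is in place, the advertised exponent $s=\min\{t-1,r-1,k\}$ results from collecting the contributions above, each of order $h^{\min(t-1,k)}$ or $h^{\min(r-1,k)}$.
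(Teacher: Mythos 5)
Your proposal is correct and follows essentially the same route as the paper: split each mesh-dependent norm into its volumetric, boundary, and (for the pressure) jump contributions, control the volumetric parts by elementwise Bramble--Hilbert/Deny--Lions estimates after extending with the Sobolev--Stein operators, and absorb the $h^{-1}$-weighted facet and cut-boundary terms via a trace inequality on cut elements whose constant is uniform in the mesh--boundary intersection (the paper's Lemma~\ref{lemma:trace_ineq}, stated in multiplicative form, which is interchangeable with the additive form you quote). The obstacle you flag is exactly the ingredient the paper isolates in its appendix, so no gap remains.
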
	
\begin{proof}
	Let us start with the velocity. We have
	\begin{align*}
		\norm{\bm E(\vv) -  r_h \vv}^2_{0,h,\Omega_\T} = \norm{\bm E(\vv) -  r_h \vv}^2_{L^2(\Omega_\T)} + h^{-1} \sum_{K\in\mathcal G_h(\Gamma_N)} \norm{\left(\bm E(\vv)- r_h \vv \right)\cdot\n}^2_{L^2(\Gamma_K)}.
	\end{align*}
	For the volumetric term, let us apply the standard Deny-Lions argument (Chapter~3 of~\cite{MR1299729}) to the interpolant $r_h$ and get
	\begin{align*}
		\norm{\bm E(\vv) -  r_h \vv}^2_{L^2(\Omega_\T)}  = \sum_{K\in\mathcal T_h} \norm{\bm E(\vv) -  r_h \vv}^2_{L^2(K)} \le C \sum_{K\in\T_h}h^{2t} \norm{\bm E(\vv)}^2_{H^{t}(K)} \le C h^{2t} \norm{\vv}^2_{H^{t}(\Omega)}.
	\end{align*}	
	For the boundary part, let us first use the multiplicative trace inequality of Lemma~\ref{lemma:trace_ineq} (componentwise) and then, again, the Deny-Lions argument:
	\begin{align*}
		h^{-1}\sum_{K\in\mathcal G_h(\Gamma_N)} \norm{\left(\bm E(\vv) -  r_h \vv\right)\cdot\n}^2_{L^2(\Gamma_K)} \le &h^{-1}\sum_{K\in\mathcal G_h(\Gamma_N)} \norm{\bm E(\vv) -  r_h \vv}^2_{L^2(\Gamma_K)}\\
		\le& C h^{-1}\sum_{K\in\mathcal G_h(\Gamma_N)} \norm{\bm E(\vv) -  r_h \vv)}_{H^1(K)} \norm{\bm E(\vv)-  r_h \vv}_{L^2(K)} \\
		\le & C h^{-1}  \sum_{K\in\T_h} h^{t-1} \norm{\bm E(\vv)}_{H^{t}(K)} h^{t} \norm{\bm E(\vv)}_{H^{t}(K)}\\
		\le& C h^{2(t-1)} \norm{\vv}^2_{H^{t}(\Omega)}.
	\end{align*}
	We move to the pressure case.
	\begin{align}\label{cutfem:eq5}
		\norm{E(q)- \Pi_{h} q}^2_{1,h,\Omega_\T}=&	\sum_{K\in\mathcal T_h}\norm{\nabla\left(E(q)- \Pi_{h} q\right)}^2_{L^2(K)} + \sum_{f\in\mathcal F_h^i}h^{-1} \norm{\left[E(q)- \Pi_{h} q\right]}^2_{L^2(f\cap\Omega)}\\
		& +\sum_{K\in\G_h(\Gamma_D)}h^{-1}\norm{E(q)-\Pi_h q}^2_{L^2(\Gamma_K)}. 
	\end{align}	
	For the volumetric term one we may proceed as in the case of the velocity to easily obtain
	\begin{align*}
		\sum_{K\in\mathcal T_h}\norm{\nabla\left(E(q)- \Pi_{h} q\right)}^2_{L^2(K)} \le  C h^{2(r-1)}\norm{q}^2_{H^{r}(\Omega)}.
	\end{align*}
	Let us focus on the jump part of~\eqref{cutfem:eq5} and take $f\in\mathcal F_h^i$ such that $f=K_1\cap K_2$.
	Then, by Lemma~\ref{lemma:trace_ineq} and the Deny-Lions Lemma, we have
	\begin{align*}	
		h^{-1}\norm{\left[E(q)- \Pi_{h} q\right]}_{L^2(f\cap\Omega)}^2 = 	&
		h^{-1}	\norm{\left(\restr{\left(E(q)- \Pi_{h} q\right)}{K_1}-\restr{\left(E(q)- \Pi_{h} q\right)}{K_2}\right)}^2_{L^2(f)} \\
		\le &C h^{-1}\norm{E(q)- \Pi_{h} q}_{L^2(K_1)}\norm{E(q)- \Pi_{h} q}_{H^1(K_1)} \\
		&+Ch^{-1}\norm{E(q)- \Pi_{h} q}_{L^2(K_2)}\norm{E(q)-\Pi_{h} q}_{H^1(K_2)} \\
		\le & Ch^{2(r-1)}\left(\norm {E(q)}^2_{H^{r}(K_1)}+\norm{E(q)}^2_{H^{r}(K_2)} \right).
	\end{align*}
	Hence,
	\begin{align*}
		\sum_{f\in\mathcal F_h^i}h^{-1} \norm{\left[E(q)- \Pi_{h} q\right]}^2_{L^2(f\cap\Omega)} \le C h^{2(r-1)} \norm{q}^2_{H^{r}(\Omega)}.
	\end{align*}
	The bound for the boundary part of~\eqref{cutfem:eq5} follows in a similar fashion.
\end{proof}	
\begin{lemma}\label{lemma:bound_ghost}
	For every $\vv \in \bm H^{t}(\Omega)$ and $q\in H^r(\Omega)$, $t\ge 1$, $r\ge 1$,
	\begin{align*}
		\bm{j}_h(r_h \vv,r_h \vv)^{\frac{1}{2}} \lesssim  h^m\norm{\vv}_{H^{t}(\Omega)},\qquad
		j_h(\Pi_h q,\Pi_h q)^{\frac{1}{2}} \lesssim  h^\ell \norm{q}_{H^{r}(\Omega)},
	\end{align*}		
	where $m:=\min\{k+1,t\}$ and $\ell:=\min\{k,r-1\}$.
\end{lemma}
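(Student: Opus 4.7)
The plan is to exploit the fact that the Sobolev--Stein extensions $\bm E(\vv)$ and $E(q)$ have no jumps in any of their derivatives across interior facets (up to the order permitted by their Sobolev regularity), so that the jumps of derivatives of the interpolants are in fact jumps of the interpolation \emph{error}. These can then be bounded facet-by-facet by a multiplicative trace inequality combined with the Deny--Lions-type approximation estimate that underlies Theorem~\ref{cutfem:thm_approx}.

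More precisely, fix a facet $f = \partial K_1 \cap \partial K_2 \in \mathcal F_h^{\Gamma}$ and $0 \le j \le k$ with $j+1 \le t$. Since $\partial_n^j \bm E(\vv) \in \bm H^{t-j}(\mathbb R^d)$ with $t - j \ge 1$, this quantity has continuous traces, so
\begin{align*}
[\partial_n^j(r_h \vv)] = [\partial_n^j(r_h \vv - \bm E(\vv))] \quad \text{on } f.
\end{align*}
The multiplicative trace inequality (Lemma~\ref{lemma:trace_ineq}) applied on each element sharing $f$, followed by the local approximation bound for $r_h$
\begin{align*}
\norm{\bm E(\vv) - r_h \vv}_{H^i(K)} \lesssim h^{m-i} \norm{\vv}_{H^t(\Omega)}, \qquad 0 \le i \le m, \quad m := \min\{k+1,t\},
\end{align*}
(whose proof is essentially that of Theorem~\ref{cutfem:thm_approx}) yields
\begin{align*}
h^{2j+1} \norm{[\partial_n^j r_h \vv]}^2_{L^2(f)} \lesssim \bigl(h^{2j}\, h^{2(m-j)} + h^{2j+2}\, h^{2(m-j-1)}\bigr)\norm{\vv}^2_{H^t(\omega_f)} \lesssim h^{2m}\norm{\vv}^2_{H^t(\omega_f)},
\end{align*}
where $\omega_f := K_1 \cup K_2$. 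Summing over $j = 0,\dots,k$ and $f \in \mathcal F_h^{\Gamma}$, and using the finite overlap of the patches $\omega_f$, gives the velocity bound. The pressure estimate is handled identically, with $n := \min\{k+1,r\}$ in place of $m$: the weight $h^{2j-1}$ in $j_h(\cdot,\cdot)$ rather than $h^{2j+1}$ is exactly balanced by the unit shift $\ell = n - 1$ in the approximation order of $\Pi_h$.

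The hard part is the range $j+1 > t$ (equivalently $j \ge t$), which only occurs when $t \le k$ and makes differentiating $\bm E(\vv)$ the required number of times illicit. The remedy is to subtract, on the macro-patch covering $\omega_f$, a Bramble--Hilbert polynomial approximation $\bm p$ of $\vv$ of degree at most $t - 1$; since $[\mathbb P_{t-1}]^d \subset \mathbb{RT}_k$ and the interpolant reproduces such polynomials, one has $[\partial_n^j r_h \vv] = [\partial_n^j r_h(\vv - \bm p)]$. A discrete trace inequality, an inverse estimate of order $j$ on $r_h(\vv - \bm p) \in \mathbb{RT}_k(K)$, the local $L^2$-stability of $r_h$ on the residual, and the Bramble--Hilbert bound $\norm{\vv - \bm p}_{L^2(\omega_f)} \lesssim h^t \norm{\vv}_{H^t(\omega_f)}$ together recover the $h^{2t}$ scaling and close the argument. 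The analogous device with $\bm p \in \mathbb P_{r-1} \subset \mathbb M_k$ takes care of the remaining indices in the pressure estimate.
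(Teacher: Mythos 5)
Your proof is correct and follows essentially the same route as the paper's: insert the Sobolev--Stein extension, use that the jumps of its normal derivatives vanish across interior facets so that only the interpolation error contributes, and conclude with the multiplicative trace inequality and the Deny--Lions estimates underlying Theorem~\ref{cutfem:thm_approx}. The only substantive difference is that you also treat the indices $j\ge t$ (resp.\ $j\ge r$) via a Bramble--Hilbert polynomial subtraction, a regime the paper's proof sidesteps by restricting to $0\le k\le r-1$; there, note that the Raviart--Thomas interpolant is not genuinely $L^2$-stable, so your ``local $L^2$-stability on the residual'' should be replaced by a bound of the form $\norm{r_h\w}_{L^2(K)}\lesssim\norm{\w}_{L^2(K)}+h\norm{\w}_{H^1(K)}$ combined with the Bramble--Hilbert estimate in both the $L^2$- and $H^1$-norms.
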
	
\begin{proof}
	Note that
	\begin{equation}
		\begin{aligned}\label{cutfem:eq11}
			j_h(\pi_h q - E(q),\pi_h q-E(q)) = 	j_h(\Pi_h q ,\Pi_h q) - 	j_h(\Pi_h q ,E(q)) - 	j_h( E(q),\Pi_h q)+ 	j_h(E(q),E(q)).
		\end{aligned}
	\end{equation}
	Given $0\le k\le r-1$, for every $0\le j \le k$, $\bm\alpha$ multi-index such that $\abs{\bm\alpha}=j$, $D^{\bm\alpha} E(q)\in H^{r-j}(\Omega_\T)\subset  H^1(\Omega_\T)$, hence $[\partial^j_n E(q)]_f$ vanishes across every $f\in\mathcal F_h^i$. Thus,~\eqref{cutfem:eq11} implies that 
	\begin{align*}
		j_h(\Pi_h q, \Pi_h q)=j_h(\Pi_h q - E(q),\Pi_h q-E(q)).
	\end{align*}
	Now, by using Lemma~\ref{lemma:trace_ineq} and a standard approximation argument, we obtain
	\begin{align*}
		j_h(\Pi_h q - E(q),\Pi_h q-E(q)) = &\sum_{f\in\mathcal F_h^{\Gamma}} \sum_{i=0}^k h^{2i-1} \norm{[\partial^i_n(\Pi_h q-E(q))]}^2_{L^2(f)}\\
		\le &C \sum_{K\in\mathcal G_h} \sum_{i=0}^k h^{2i-1}  \norm{D^i \left( \Pi_h q-E(q)) \right)}_{L^2(K)}\norm{D^{i+1}\left(\Pi_h q-E(q))\right)}_{L^2(K)}\\
		\le& C h^{2\ell} \norm{E(q)}^2_{H^{r}(\Omega_\T)} \le C h^{2\ell} \norm{q}^2_{H^{r}(\Omega)},
	\end{align*}
	where $\ell:=\min\{k,r-1\}$ and the last inequality follows from the boundedness of $E$.
	The bound for $\bm{j}_h(\cdot,\cdot)$ follows in a completely similar fashion.
\end{proof}
\begin{theorem}\label{thm:apriori}
	Let $\left(\u,p\right)\in \bm H^{t}\left(\Omega\right)\times H^{r}\left(\Omega\right)$, $t\ge 1$, $r\ge 1$, be the solution of problem~\eqref{prob:cont}. Then, the finite element solution $\left(\u_h,p_h\right)\in  V_h\times  Q_h$ of~\eqref{cutfem:disc_pb} satisfies
	\begin{equation*}
		\norm{\u- \u_h}_{L^2(\Omega)}+ \left(\sum_{K\in \T_h}\norm{ \nabla \left(p- p_h\right)}^2_{L^2(K\cap\Omega)}\right)^{\frac{1}{2}}   \le C h^{s} \left(\norm{\u}_{H^{t}(\Omega)} + \norm{p}_{H^{r}(\Omega)}  \right),
	\end{equation*}
	where $s:= \min\{t-1,r-1,k\}$.
\end{theorem}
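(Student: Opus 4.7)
The plan is to follow the standard route for Nitsche-type mixed analyses: split the error into an interpolation error plus a discrete error, bound the interpolation error by Theorem~\ref{cutfem:thm_approx}, and control the discrete error via the inf-sup bound of Theorem~\ref{thm:cutfem_stability} combined with the weak Galerkin orthogonality of Proposition~\ref{prop:weak_go}.

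First, I would reduce the left-hand side norms on $\Omega$ to the triple norm on $\Omega_\T$. Since $\bm E\u=\u$ and $E(p)=p$ on $\Omega$, one has
\begin{align*}
\norm{\u-\u_h}_{L^2(\Omega)} \le \norm{\bm E\u-\u_h}_{0,h,\Omega_\T},\qquad \left(\sum_{K\in\T_h}\norm{\nabla(p-p_h)}_{L^2(K\cap\Omega)}^2\right)^{1/2} \le \norm{E(p)-p_h}_{1,h,\Omega_\T}.
\end{align*}
A triangle inequality then inserts $r_h\u$ and $\Pi_hp$; the interpolation part is handled directly by Theorem~\ref{cutfem:thm_approx}, reducing the task to bounding the discrete error $\vertiii{(r_h\u-\u_h,\Pi_hp-p_h)}$ by $Ch^s(\norm{\u}_{H^t(\Omega)}+\norm{p}_{H^r(\Omega)})$.

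Second, Theorem~\ref{thm:cutfem_stability} gives
\begin{align*}
\vertiii{(r_h\u-\u_h,\Pi_hp-p_h)} \lesssim \sup_{(\vv_h,q_h)\in V_h\times Q_h} \frac{\mathcal A_h((r_h\u-\u_h,\Pi_hp-p_h);(\vv_h,q_h))}{\vertiii{(\vv_h,q_h)}},
\end{align*}
and I would decompose
\begin{align*}
\mathcal A_h((r_h\u-\u_h,\Pi_hp-p_h);(\vv_h,q_h))&=\mathcal A_h((r_h\u-\u,\Pi_hp-p);(\vv_h,q_h))\\
&\quad + \mathcal A_h((\u-\u_h,p-p_h);(\vv_h,q_h)).
\end{align*}
Proposition~\ref{prop:weak_go} rewrites the second summand as $\bm j_h(\u,\vv_h)-j_h(p,q_h)$, while the first summand is controlled by the continuity of Proposition~\ref{prop:continuity} together with Theorem~\ref{cutfem:thm_approx}, which delivers the bound $Ch^s(\norm{\u}_{H^t(\Omega)}+\norm{p}_{H^r(\Omega)})\vertiii{(\vv_h,q_h)}$.

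Third, I would bound the residual ghost penalty terms $\bm j_h(\u,\vv_h)$ and $j_h(p,q_h)$. Since the global extensions $\bm E\u$ and $E(p)$ have vanishing jumps across internal faces for all derivative orders below the Sobolev threshold (this is precisely the identity exploited in the proof of Lemma~\ref{lemma:bound_ghost}), the ghost penalty applied to the exact solution coincides, up to terms absorbed by approximation, with the ghost penalty applied to the interpolant. Then the Cauchy-Schwarz estimate of Lemma~\ref{semi_cs}, the interpolation bound of Lemma~\ref{lemma:bound_ghost}, and the continuity estimate $\bm j_h(\vv_h,\vv_h)^{1/2}\lesssim\norm{\vv_h}_{0,h,\Omega_\T}$ from Proposition~\ref{prop:continuity} (and its analogue for $j_h$) yield
\begin{align*}
\bm j_h(\u,\vv_h)-j_h(p,q_h)\lesssim h^{s}(\norm{\u}_{H^t(\Omega)}+\norm{p}_{H^r(\Omega)})\vertiii{(\vv_h,q_h)},
\end{align*}
since the exponents $m=\min\{k+1,t\}$ and $\ell=\min\{k,r-1\}$ from Lemma~\ref{lemma:bound_ghost} both dominate $s=\min\{t-1,r-1,k\}$. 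Dividing by $\vertiii{(\vv_h,q_h)}$ and combining with the interpolation estimate closes the proof.

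The main obstacle I expect is the rigorous handling of the residual ghost penalty terms when $t,r<k+1$: the jumps $[\partial_n^j\u]$ and $[\partial_n^jp]$ are not classically well-defined for $j$ exceeding the available Sobolev regularity. The resolution is the device from Lemma~\ref{lemma:bound_ghost} — inserting the interpolant and using that jumps of the smooth extensions $\bm E\u$ and $E(p)$ vanish in the relevant range — which shifts the analysis to discrete quantities for which inverse-trace inequalities and Deny-Lions approximation apply.
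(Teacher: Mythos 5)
Your proposal is correct and follows essentially the same route as the paper's proof: the same triangle-inequality splitting via $r_h\u$ and $\Pi_h p$, the same use of Theorem~\ref{cutfem:thm_approx} for the interpolation part, and the same treatment of the discrete error via Theorem~\ref{thm:cutfem_stability}, Proposition~\ref{prop:weak_go}, Proposition~\ref{prop:continuity}, and Lemmas~\ref{semi_cs} and~\ref{lemma:bound_ghost}. Your closing remark on replacing $\bm j_h(\u,\cdot)$ and $j_h(p,\cdot)$ by their interpolated counterparts is in fact handled more explicitly than in the paper, which passes to $\abs{\bm j_h(r_h\u,\vv_h)}+\abs{j_h(\Pi_h p,q_h)}$ without comment.
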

\begin{proof}
	Firstly, we observe that
	\begin{align*}
		\norm{\u- \u_h}_{L^2(\Omega)}+ \left(\sum_{K\in \T_h}\norm{ \nabla \left(p- p_h\right)}^2_{L^2(K\cap\Omega)}\right)^{\frac{1}{2}}  \le & \norm{\bm E(\u)-\u_h}_{0,h,\Omega_\T} + \norm{E(p)-p_h}_{1,h,\Omega_\T}\\
		\le &\sqrt{d} \vertiii{(\bm E(\u)-\u_h,E(p)-p_h)},
	\end{align*}
	it suffices to bound $\vertiii{(E(\u)-\u_h,E(p)-p_h)}$. Let us proceed by triangular inequality:
	\begin{align}\label{eq:covid_1}
		\vertiii{ (\bm E (\u) - \u_h,E(p)-p_h)}	\le \underbrace{\vertiii{ (\bm E (\u) - r_h\u, E(p)-\Pi_h p)}}_{\RomanNumeralCaps 1}	 + \underbrace{\vertiii{ ( r_h\u - \u_h,\Pi_h p-p_h)}}_{\RomanNumeralCaps 2}	.
	\end{align}	
	Theorem~\ref{cutfem:thm_approx} implies, for $s=\min\{t-1,r-1,k\}$,
	\begin{align}\label{eq:covid_2}
		\RomanNumeralCaps 1 \le C h^{s} \left(\norm{\u}_{H^{k}(\Omega)} + \norm{p}_{H^{r}(\Omega)}  \right).
	\end{align}	
	By Theorem~\ref{thm:cutfem_stability}, for $(\bm\u_h-r_h\bm\u,p_h-\pi_h p)$ there exists $\left(\vv_h,q_h\right)\in V_h\times Q_h$ such that
	\begin{align}\label{eq:covid_3}
		\RomanNumeralCaps 2=	\vertiii{(\bm\u_h-r_h\bm\u,p_h-\Pi_h p)}\lesssim 
		\frac{\mathcal A_h\left( \left(\bm\u_h-r_h\bm\u,p_h-\Pi_h p\right);\left(\vv_h,q_h\right)\right)}{\vertiii{(\vv_h,q_h)}}.
	\end{align}
	From Propositions~\ref{prop:weak_go},~\ref{prop:continuity}, and the definition of $\vertiii{\cdot}$, it holds
	\begin{equation}
		\begin{aligned}\label{eq:covid}
			\mathcal A_h\left( \left(\bm\u_h-r_h\bm\u,p_h-\Pi_h p\right);\left(\vv_h,q_h\right)\right) =& \mathcal A_h \left((\u_h - \u, p_h -p);(\vv_h,q_h) \right)\\
			&+  \mathcal A_h \left((\u - r_h\u, p -\Pi_h p);(\vv_h,q_h) \right)	\\
			\lesssim& \abs{\bm{j}_h (r_h\u,\vv_h)}+ \abs{j_h(\Pi_h p,q_h)}\\
			&+ \vertiii{(\u-r_h \u,p-\Pi_h p)}\vertiii{(\vv_h,q_h)} .
		\end{aligned}		
	\end{equation}	
	Lemmas~\ref{semi_cs},~\ref{lemma:bound_ghost} entail, for $s:= \min\{t-1,r-1,k\}$,
	\begin{equation}
		\begin{aligned}\label{eq:torneri}
			\abs{\bm{j}_h (r_h\u,\vv_h)}+ \abs{j_h(\Pi_h p,q_h)} \le &	\bm{j}_h (r_h\u,r_h\u)^{\frac{1}{2}} \bm{j}_h (\vv_h,\vv_h)^{\frac{1}{2}} + 	j_h (\Pi_h p,\Pi_h p)^{\frac{1}{2}} j_h (q_h,q_h)^{\frac{1}{2}} \\
			\lesssim & h^s \norm{\u}_{H^t(\Omega)} \norm{\vv_h}_{0,h,\Omega_\T} + h^s \norm{p}_{H^r(\Omega)}\norm{q_h}_{1,h,\Omega_\T}\\
			\lesssim & h^s \left(\norm{\u}_{H^t(\Omega)} + \norm{p}_{H^r(\Omega)} \right) \vertiii{(\vv_h,q_h)},
		\end{aligned}
	\end{equation}	
	where $s:= \min\{t-1,r-1,k\}$.
	By plugging~\eqref{eq:torneri} back into~\eqref{eq:covid} and using Theorem~\ref{cutfem:thm_approx}, we obtain
	\begin{equation}
		\begin{aligned}\label{eq:covid_4}
			\mathcal A_h\left( \left(\bm\u_h-r_h\bm\u,p_h-\Pi_h p\right);\left(\vv_h,q_h\right)\right) \lesssim & 		h^s \left(\norm{\u}_{H^t(\Omega)} + \norm{p}_{H^r(\Omega)} \right) \vertiii{(\vv_h,q_h)}.
		\end{aligned}
	\end{equation}			
	We combine~\eqref{eq:covid_1},~\eqref{eq:covid_2},~\eqref{eq:covid_3} and~\eqref{eq:covid_4}, getting
	\begin{align*}
		\vertiii{ (\bm E (\u) - \u_h,E(p)-p_h)} \lesssim &h^s \left(\norm{\u}_{H^{t}(\Omega)} +\norm{p}_{H^{r}(\Omega)}\right) ,
	\end{align*}
	where $s:= \min\{t-1,r-1,k\}$.
\end{proof}

\begin{remark}
	The convergence rates given by Theorem~\ref{cutfem:thm_approx} are optimal for the chosen discrete norms. On the other hand, the scaling of the energy norm $\norm{\cdot}_{0,h,\Omega_\T}$ does not allow us to obtain optimal convergence rates for the velocities with respect to the $L^2$-norm by simply applying Theorem~\ref{cutfem:thm_approx}. This is due to the term $h^{-1} \int_{\Gamma_N}\u_h\cdot\n\vv_h\cdot\n$: the natural weight, mimicking the $H^{-\frac{1}{2}}$-scalar product, would be $h$ instead of $h^{-1}$. However, as already discussed in~\cite{puppi_burman} such weight does not lead to an optimally converging scheme.
\end{remark}

\section{The condition number}\label{section5}
Following~\cite{MR2050138} and~\cite{MR3268662}, we want to show that the Euclidean condition number of the matrix arising from the discretization~\eqref{cutfem:disc_pb2} is uniformly bounded by $Ch^{-2}$, where $C>0$ is independent of how the underlying mesh cuts the boundary. Let us observe that the usual scaling of the condition number for a finite element discretization of the Darcy problem is $\mathcal O(h^{-1})$. We pay with a factor $h^{-1}$ because of the choice of the discrete norms.

Denoting $N=\dim V_h\times Q_h$, we can expand an arbitrary element $(\vv_h,q_h)\in V_h\times Q_h$ as $(\vv_h,q_h) = \sum_{i=1}^N \mathcal V_i \varphi_i$, where $\left(\varphi \right)_{i=1}^N$ is the finite element basis for the product space $V_h\times Q_h$ and $\mathcal V\in\R^N$ is its coordinate vector. The previous expansion of the elements of $V_h\times Q_h$ uniquely defines a canonical isometric isomorphism between  $V_h\times Q_h$ and $\R^N$, namely 
\begin{align*}
	\mathcal C: V_h \times Q_h \to \R^N, \qquad (\vv_h, q_h) \mapsto \mathcal V	.
\end{align*}	
Here,  $\R^N$ is equipped with the standard Euclidean scalar product, denoted as $\left( \cdot,\cdot \right)_{\ell^2}$, and the induced norm $\abs{\cdot}_{\ell^2}$. Given $M \in \R^{N\times N}$, we denote as $\norm{M}_2$ the matrix norm of $M$ induced by $\abs{\cdot}_{\ell^2}$. Let $A\in\R^{N\times N}$ denote the matrix associated to the discrete formulation~\eqref{cutfem:disc_pb2}, namely
\begin{align*}
	\left( A\mathcal V,\mathcal W\right)_{\ell^2}=\mathcal A_h \left( (\vv_h,q_h);(\w_h,r_h) \right), \qquad\forall\ \mathcal V, \mathcal W \in \R^N,
\end{align*}	
where $\mathcal V= \mathcal C (\vv_h,q_h)$, $\mathcal W = \mathcal C (\w_h,r_h)$ and $(\vv_h,q_h),(\w_h,r_h)\in V_h\times Q_h$. 
\begin{remark}
	In the pure Neumann case $\Gamma=\Gamma_N$, then the solution to~\eqref{cutfem:disc_pb2} for the pressure is determined up to a constant, hence $A$ is singular and $\ker A=\operatorname{span}\{ \mathcal C (\bm 0,1)\}$. We shall consider, instead of $A$, its bijective restriction $\restr{A}{\hat{\R}^N}: \hat {\R}^N \to \tilde {\R}^N$, where $\hat{\R}^N := \R^N/ \ker A$ and $\tilde{\R}^N:=\Im (A)$. 
\end{remark}
As already said, the goal is to analyze the Euclidean condition number $\kappa_2(A):=\norm{A}_2 \norm{A^{-1}}_2$. From~\cite{MR1299729}, we know that the main ingredients for the conditioning analysis are the following:
\begin{enumerate}
	\item the stability of the discrete formulation	with respect to a given norm $\norm{\cdot}_a$;
	\item the $\ell^2$-stability of the basis with respect to a given norm $\norm{\cdot}_b$;
	\item the equivalence between the norms $\norm{\cdot}_a$ and $\norm{\cdot}_b$.
\end{enumerate}	
In the subsequent exposition, the product norm~\eqref{triple:norm} plays the role of $\norm{\cdot}_b$, while $\vertiii{\cdot}_{L^2(\Omega_\T)}$, defined as $\vertiii{(\vv_h,q_h)}^2_{L^2(\Omega_{\T})} := \norm{\vv_h}^2_{L^2(\Omega_\T)} + \norm{q_h}^2_{L^2(\Omega_\T)}$, corresponds to $\norm{\cdot}_a$.
\begin{lemma} \label{cond:lemma1}
	There exist $C_1,C_2>0$ such that, for every $\left( \vv_h,q_h\right)\in V_h\times Q_h$,
	\begin{align*}
		C_1 h^{\frac{d}{2}} \abs{\mathcal V}_{\ell^2}	\le \vertiii{(\vv_h,q_h)}_{L^2(\Omega_{\T})} \le C_2 h^{\frac{d}{2}} \abs{\mathcal V}_{\ell^2},
	\end{align*}	
	where $\mathcal V=\mathcal C (\vv_h,q_h)$.
\end{lemma}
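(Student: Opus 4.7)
The plan is to reduce the global estimate to a local one on each element, transport each element to the reference element via the Piola transform and the scalar pullback, and close the argument with finite-dimensional norm equivalence combined with standard scaling.

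First, I will exploit the locality of the finite element basis. Shape-regularity of $\mathcal T_h$ ensures that each global basis function is supported on a uniformly bounded number of elements, so if $\mathcal V_K$ denotes the subvector of local coefficients associated to an element $K$, then
\begin{align*}
	\abs{\mathcal V}_{\ell^2}^2 \;\sim\; \sum_{K\in\mathcal T_h} \abs{\mathcal V_K}_{\ell^2}^2,
\end{align*}
with constants independent of $h$ and of the mesh–boundary intersection. At the same time, $\vertiii{(\vv_h,q_h)}^2_{L^2(\Omega_{\mathcal T})} = \sum_{K\in\mathcal T_h}\bigl(\norm{\vv_h}^2_{L^2(K)} + \norm{q_h}^2_{L^2(K)}\bigr)$. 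Hence it is enough to prove, for every $K\in\mathcal T_h$, the two-sided local bound $\norm{\vv_h}^2_{L^2(K)} + \norm{q_h}^2_{L^2(K)} \sim h_K^d \abs{\mathcal V_K}_{\ell^2}^2$.

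Second, I transport $(\restr{\vv_h}{K},\restr{q_h}{K})$ to $(\hat \vv_h,\hat q_h)$ on the reference element $\hat K$ through $\mathcal P_K^{-1}$ and $\F_K^p$, respectively. By construction of the basis and by the invariance of the degrees of freedom under these transforms (see the Remark following~\eqref{dofs_pressure}), the coefficients of $(\hat\vv_h,\hat q_h)$ in the reference basis are exactly $\mathcal V_K$. Since $\mathbb{RT}_k(\hat K)\times\mathbb M_k(\hat K)$ is finite-dimensional and the map from coefficient vector to function is a linear isomorphism, all norms are equivalent and we obtain
\begin{align*}
	\norm{\hat \vv_h}^2_{L^2(\hat K)} + \norm{\hat q_h}^2_{L^2(\hat K)} \;\sim\; \abs{\mathcal V_K}_{\ell^2}^2,
\end{align*}
with constants depending only on $\hat K$ and on the polynomial degree $k$.

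Third, I scale back to $K$ by a standard change-of-variables argument. For a simplex $\F_K$ is affine with $\abs{\det D\F_K}\sim h_K^d$ and $\norm{D\F_K},\norm{D\F_K^{-1}}$ of sizes $h_K$ and $h_K^{-1}$ respectively; for a quadrilateral $\F_K$ is bilinear but shape-regularity guarantees the same pointwise bounds. Plugging these into the definitions of $\mathcal P_K$ and $\F_K^p$ (with the basis normalization chosen so that Piola and pullback are consistently rescaled, as is standard in the conditioning analyses of \cite{MR2050138,MR3268662}) yields the uniform element-wise equivalence $\norm{\vv_h}^2_{L^2(K)} + \norm{q_h}^2_{L^2(K)} \sim h_K^d \abs{\mathcal V_K}_{\ell^2}^2$. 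Quasi-uniformity of $\mathcal T_h$ allows me to replace $h_K$ by $h$ throughout, and summation over $K\in\mathcal T_h$ together with the first step concludes the proof.

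The main technical subtlety lies in the bookkeeping of the Jacobian factors under the Piola transform, which behaves differently from the scalar pullback used for the pressure. This is resolved once and for all by adopting the canonical basis normalization that makes the local mass matrix uniformly well-conditioned after dividing by $h^d$; with that convention the velocity and pressure contributions exhibit the same $h^d$ scaling and the two-sided bound follows. A minor additional care is needed for quadrilateral meshes, where $\F_K$ is only bilinear, but shape-regularity makes the pointwise bounds on $D\F_K$ uniform in $h$, so the argument goes through unchanged.
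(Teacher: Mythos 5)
Your proof is correct and follows the standard localization--reference-element--scaling argument, which is essentially what the paper itself relies on: it gives no written proof, appealing only to shape-regularity and quasi-uniformity and citing Lemma~A.1 of~\cite{MR2223503}. Your explicit observation that the velocity basis must be normalized so that $\norm{\varphi_i}_{L^2(\Omega_\T)}\sim h^{d/2}$ --- the raw Piola push-forward of the reference basis would instead scale like $h^{(2-d)/2}$ --- is a point the paper leaves entirely implicit, and it is genuinely needed for the stated two-sided bound to hold simultaneously for the velocity and pressure components.
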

\begin{proof}
	The result holds because the background mesh is shape-regular and quasi-uniform. We refer the interested reader to Lemma~A.1 in~\cite{MR2223503}.	
\end{proof}	

\begin{lemma}\label{cond:lemma2}
	There exist $C_1, C_2>0$ such that, for every $\left( \vv_h,q_h\right)\in V_h\times Q_h$,
	\begin{align*}
		C_1 	h^{-1}\vertiii{(\vv_h,q_h)}_{L^2(\Omega_{\T})} \le \vertiii{(\vv_h,q_h)}\le  C_2	h^{-1}\vertiii{(\vv_h,q_h)}_{L^2(\Omega_{\T})}.
	\end{align*}		
\end{lemma}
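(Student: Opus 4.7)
The plan is to prove both inequalities by reducing them to local inverse and trace estimates on the finite-dimensional polynomial space $V_h\times Q_h$, using shape-regularity and quasi-uniformity of $\mathcal T_h$. The whole statement is a purely algebraic comparison of two norms on $V_h\times Q_h$, so no Poincaré-type or continuous inequality is needed: every contribution is scaled in terms of $h$ and local $L^2$-norms.

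For the upper bound $\vertiii{(\vv_h,q_h)}\le C_2 h^{-1}\vertiii{(\vv_h,q_h)}_{L^2(\Omega_\T)}$, I bound each term of $\vertiii{(\vv_h,q_h)}^2$ by $h^{-2}$ times a local $L^2$-contribution. The bulk velocity term $\|\vv_h\|_{L^2(\Omega_\T)}^2$ already appears in $\vertiii{(\vv_h,q_h)}_{L^2(\Omega_\T)}^2$. For each cut element $K\in\mathcal G_h(\Gamma_N)$ the discrete trace inequality uniform in the mesh--boundary intersection (Lemma~\ref{lemma:disc_trace_ineq}) yields $\|\vv_h\cdot\n\|_{L^2(\Gamma_K)}^2\lesssim h^{-1}\|\vv_h\|_{L^2(K)}^2$, hence $h^{-1}\|\vv_h\cdot\n\|_{L^2(\Gamma_K)}^2\lesssim h^{-2}\|\vv_h\|_{L^2(K)}^2$. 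For the pressure, the classical polynomial inverse inequality gives $\|\nabla q_h\|_{L^2(K)}^2\lesssim h^{-2}\|q_h\|_{L^2(K)}^2$ on each $K\in\mathcal T_h$, and a discrete trace inequality applied on each side of an internal face $f=\partial K_1\cap\partial K_2$ yields $\|[q_h]\|_{L^2(f)}^2\lesssim h^{-1}(\|q_h\|_{L^2(K_1)}^2+\|q_h\|_{L^2(K_2)}^2)$, so that $h^{-1}\|[q_h]\|_{L^2(f\cap\Omega)}^2\lesssim h^{-2}(\|q_h\|_{L^2(K_1)}^2+\|q_h\|_{L^2(K_2)}^2)$; the $\Gamma_D$-boundary term is treated as the $\Gamma_N$ velocity term. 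Summing over $\mathcal T_h$ and $\mathcal F_h^i$ and using shape-regularity to bound the number of faces per element produces the bound.

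For the lower bound $\vertiii{(\vv_h,q_h)}_{L^2(\Omega_\T)}\lesssim h\,\vertiii{(\vv_h,q_h)}$, I proceed by a scaling argument on the reference element coupled with equivalence of norms on finite-dimensional spaces. Expanding $(\vv_h,q_h)$ in a nodal basis and passing to $\hat K$, each squared quantity in $\vertiii{(\vv_h,q_h)}^2$ scales like $h^{d-2}$ times the $\ell^2$-norm of the DOFs (the $h^{-2}$ factor arising either from the gradient or from the $h^{-1}$-weight in the boundary/jump terms combined with a reference-element trace identity), whereas every squared quantity in $\vertiii{(\vv_h,q_h)}_{L^2(\Omega_\T)}^2$ scales like $h^d$, exactly as in Lemma~\ref{cond:lemma1}. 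Comparing the two scalings on the reference configuration and pushing forward produces the factor $h^2$; the constants only depend on $k$, $d$, and the shape-regularity parameter.

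The main technical point is the uniformity of the discrete trace inequality on cut elements with respect to how $\Gamma$ intersects the mesh, since naively the constant in $\|v\|_{L^2(\Gamma_K)}\le Ch_K^{-1/2}\|v\|_{L^2(K)}$ would blow up as the cut degenerates. This is precisely what Assumption~\ref{assumption:patch} together with Lemma~\ref{lemma:disc_trace_ineq} delivers. Once that uniformity is granted, every remaining step is a standard polynomial scaling argument and the two inequalities follow directly.
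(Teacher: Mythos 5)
Your proof of the right inequality, $\vertiii{(\vv_h,q_h)}\le C_2 h^{-1}\vertiii{(\vv_h,q_h)}_{L^2(\Omega_{\T})}$, is correct and is essentially the paper's argument: element-wise inverse estimates for $\nabla q_h$, the standard discrete trace inequality on whole faces for the jump terms, and Lemma~\ref{lemma:disc_trace_ineq} (together with the scalar analogue obtained from Lemma~\ref{lemma:trace_ineq} plus an inverse estimate) for the cut-boundary terms; all constants are uniform in the cut because these estimates are formulated on full elements $K$ rather than on $K\cap\Omega$.

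The left inequality is where the proposal breaks down. The claim that ``each squared quantity in $\vertiii{(\vv_h,q_h)}^2$ scales like $h^{d-2}$ times the $\ell^2$-norm of the DOFs'' holds only as an upper bound; as a lower bound it is false, since every contribution to $\vertiii{\cdot}^2$ except $\norm{\vv_h}^2_{L^2(\Omega_{\T})}$ can vanish on nonzero discrete functions: $\nabla q_h$ and $[q_h]$ vanish for $q_h\equiv 1$, and $\vv_h\cdot\n$ vanishes on $\Gamma_N$ for any $\vv_h$ supported away from $\mathcal G_h(\Gamma_N)$. For such a $\vv_h$ with $q_h=0$ one has $\vertiii{(\vv_h,0)}=\norm{\vv_h}_{L^2(\Omega_{\T})}=\vertiii{(\vv_h,0)}_{L^2(\Omega_{\T})}$, so the inequality $C_1h^{-1}\vertiii{\cdot}_{L^2(\Omega_{\T})}\le\vertiii{\cdot}$ would force $C_1\le h$, a contradiction as $h\to 0$; likewise $(\bm 0,1)$ gives $\vertiii{(\bm 0,1)}^2= h^{-1}\abs{\Gamma_D}$ against $C_1^2h^{-2}\abs{\Omega_{\T}}$. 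Hence the left inequality with the weight $h^{-1}$ is not provable by any scaling argument — it fails as stated, and the $h^{-1}$ should be dropped from the left-hand side. What the paper actually proves, and the only thing used later in the conditioning theorem, is the unweighted bound $\vertiii{(\vv_h,q_h)}_{L^2(\Omega_{\T})}\lesssim\vertiii{(\vv_h,q_h)}$: for the velocity this is immediate from the definition of $\norm{\cdot}_{0,h,\Omega_{\T}}$, and for the pressure it follows from a Poincar\'e--Friedrichs inequality for piecewise $H^1$ functions, in which the jump terms together with the $\Gamma_D$ boundary term (or the zero-mean constraint in the pure Neumann case) supply the control that excludes constants. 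You should replace your scaling argument by this Poincar\'e--Friedrichs argument and correct the statement accordingly.
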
	
\begin{proof}
	The first bound follows because of a Poincar\'e-Friedrichs inequality for piecewise $H^1$-functions (Section~10.6 of~\cite{MR2373954}).	For the second inequality it is sufficient to apply standard inverse estimates for boundary-fitted finite elements.	
\end{proof}	

\begin{theorem}
	There exists $C>0$ such that
	\begin{align*}
		\kappa_2 (A)\le C h^{-2}.	
	\end{align*}	
\end{theorem}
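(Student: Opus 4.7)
The plan is to bound $\norm{A}_2$ and $\norm{A^{-1}}_2$ separately, exploiting the three-ingredient strategy recalled just above the statement: stability of $\mathcal A_h(\cdot;\cdot)$ in the norm $\vertiii{\cdot}$, $\ell^2$-stability of the basis (Lemma~\ref{cond:lemma1}), and the norm equivalence of Lemma~\ref{cond:lemma2}. The non-obvious point in the accounting is that continuity is a two-sided estimate in $\vertiii{\cdot}$ on $\mathcal A_h$, whereas inf-sup is a one-sided one; this asymmetry is exactly what produces the extra $h^{-1}$ relative to the standard $\mathcal O(h^{-1})$ conditioning of mixed Darcy discretisations mentioned at the beginning of the section.

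For the upper bound on $\norm{A}_2$, I would fix $\mathcal V,\mathcal W\in\mathbb R^N$ with preimages $(\vv_h,q_h),(\w_h,r_h)\in V_h\times Q_h$, identify $(A\mathcal V,\mathcal W)_{\ell^2}=\mathcal A_h((\vv_h,q_h);(\w_h,r_h))$, and apply Proposition~\ref{prop:continuity} termwise, including the continuity bounds on $\bm j_h$ and $j_h$ already encoded in the components of $\vertiii{\cdot}$, to obtain $(A\mathcal V,\mathcal W)_{\ell^2}\lesssim \vertiii{(\vv_h,q_h)}\,\vertiii{(\w_h,r_h)}$. Two applications of the inverse-estimate direction of Lemma~\ref{cond:lemma2} followed by two applications of Lemma~\ref{cond:lemma1} convert this into $(A\mathcal V,\mathcal W)_{\ell^2}\lesssim h^{d-2}\abs{\mathcal V}_{\ell^2}\abs{\mathcal W}_{\ell^2}$, so taking suprema gives $\norm{A}_2\lesssim h^{d-2}$.

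For the upper bound on $\norm{A^{-1}}_2$, I would invoke Theorem~\ref{thm:cutfem_stability} to produce, for every $\mathcal V$, a test pair $(\w_h,r_h)\in V_h\times Q_h$ with image $\mathcal W\in\mathbb R^N$ realising the inf-sup, so that
\begin{equation*}
\vertiii{(\vv_h,q_h)}\lesssim \frac{(A\mathcal V,\mathcal W)_{\ell^2}}{\vertiii{(\w_h,r_h)}}\le \abs{A\mathcal V}_{\ell^2}\,\frac{\abs{\mathcal W}_{\ell^2}}{\vertiii{(\w_h,r_h)}}.
\end{equation*}
The ratio on the right-hand side is controlled by $h^{-d/2}$ by chaining Lemma~\ref{cond:lemma1} (one factor of $h^{-d/2}$) with the Poincar\'e-type direction of Lemma~\ref{cond:lemma2} (no further factor of $h$). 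The same pair of equivalences applied on the $(\vv_h,q_h)$-side bounds $\abs{\mathcal V}_{\ell^2}$ by $h^{-d/2}\vertiii{(\vv_h,q_h)}$, and composing the two estimates yields $\abs{\mathcal V}_{\ell^2}\lesssim h^{-d}\abs{A\mathcal V}_{\ell^2}$, hence $\norm{A^{-1}}_2\lesssim h^{-d}$.

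Combining the two bounds gives $\kappa_2(A)=\norm{A}_2\,\norm{A^{-1}}_2\lesssim h^{d-2}\cdot h^{-d}=h^{-2}$, as announced. The only extra care is the pure-Neumann case in which $A$ has the one-dimensional kernel $\operatorname{span}\{\mathcal C(\bm 0,1)\}$: the argument has to be carried out on the quotient $\hat{\mathbb R}^N=\mathbb R^N/\ker A$ mapped onto $\Im(A)$, but Lemmas~\ref{cond:lemma1} and~\ref{cond:lemma2} restrict without modification since they only rely on shape-regularity and quasi-uniformity of the background mesh. The real obstacle, and the reason a naive reading would produce a spurious $\mathcal O(1)$ bound, is to keep track of when Lemma~\ref{cond:lemma2} is applied in its inverse-estimate direction (twice in the $\norm{A}_2$ bound, each time paying an $h^{-1}$) versus its Poincar\'e direction (twice in the $\norm{A^{-1}}_2$ bound, paying no power of $h$), so that the $h^{-1}$ factors accumulate only on the side that matters.
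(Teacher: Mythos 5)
Your proposal is correct and follows essentially the same route as the paper: bound $\norm{A}_2\lesssim h^{d-2}$ via continuity of $\mathcal A_h$ plus Lemmas~\ref{cond:lemma2} and~\ref{cond:lemma1}, and bound $\norm{A^{-1}}_2\lesssim h^{-d}$ via the inf-sup condition of Theorem~\ref{thm:cutfem_stability} combined with the same two lemmas (your direct estimate $\abs{A\mathcal V}_{\ell^2}\ge (A\mathcal V,\mathcal W)_{\ell^2}/\abs{\mathcal W}_{\ell^2}$ is just a slightly cleaner packaging of the paper's double-supremum identity). The bookkeeping of which direction of Lemma~\ref{cond:lemma2} is used where matches the paper's computation exactly.
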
	
\begin{proof}
	Let us start by bounding $\norm{A}_{\ell^2}$. Given $(\vv_h,q_h), (\w_h,r_h)\in V_h\times Q_h$ such that $\mathcal C(\vv_h,q_h)=\mathcal V$, $\mathcal C(\w_h,r_h)=\mathcal W$, we have
	\begin{align*}
		(A\mathcal V, \mathcal W)_{\ell^2} =& \mathcal A_h \left (\vv_h,q_h); (\w_h,r_h) \right) \lesssim  \vertiii{(\vv_h,q_h)}  \vertiii{(\w_h,r_h)}\\
		&  \lesssim h^{-2}  \vertiii{(\vv_h,q_h)}_{L^2(\Omega_\T)}  \vertiii{(\w_h,r_h)}_{L^2(\Omega_\T)} 
		\lesssim h^{d-2} \abs{\mathcal V}_{\ell^2} \abs{\mathcal W}_{\ell^2}.
	\end{align*}
	In the previous inequalities we used, respectively, the continuity of $\mathcal A_h(\cdot;\cdot)$, Lemma~\ref{cond:lemma2} and Lemma~\ref{cond:lemma1}.
	Hence, $\norm{A}_2 \lesssim h^{d-2}$.
	We need to bound $\norm{A^{-1}}_2$. Since (the restriction of) $A^{-1}$ is invertible, we can write
	\begin{align}\label{cond:eq1}
		\norm{A^{-1}}_{2} = \sup_{\substack{\mathcal Y \in{\R}^N \setminus\{0\}}} \frac{\abs{A^{-1}\mathcal Y}_{\ell^2}}{\abs{\mathcal Y}_{\ell^2}} = \sup_{\substack{A\mathcal V \in {\R}^N \\ \mathcal V \in {\R} \setminus \{0\}  }} \frac{\abs{\mathcal V}}{\abs{A\mathcal V}} = \sup_{\substack{ A\mathcal V \in{\R}^N \\ \mathcal V\in {\R}^N\setminus \{0\}} } \sup_{\substack{  \mathcal W\in {\R}^N\setminus \{0\}}} \frac{\abs{\mathcal V}_{\ell^2}\abs{\mathcal W}_{\ell^2}}{\left(A\mathcal V,\mathcal W\right)_{\ell^2}}.
	\end{align}
	From Theorem~\ref{thm:cutfem_stability}, we know that, for every $\mathcal V\in {\R}^N\setminus\{\bm 0\}$, there exists $\mathcal W$ such that
	\begin{equation}
		\begin{aligned}\label{cond:eq2}
			\left( A\mathcal V,\mathcal W \right)_{\ell^2}	=&\mathcal{A}_h \left( (\vv_h,q_h); (\w_h,q_h) \right) \gtrsim \vertiii{(\vv_h,q_h)} \vertiii{(\w_h,r_h)}  \gtrsim \vertiii{(\vv_h,q_h)}_{L^2(\Omega_\T)} \vertiii{(\w_h,r_h)}_{L^2(\Omega_\T)}\\
			\gtrsim  & h^d \abs{\mathcal V}_{\ell^2} \abs{\mathcal W}_{\ell^2}.
		\end{aligned}	
	\end{equation}
	Moreover in the last two inequalities we used, respectively, Lemma~\ref{cond:lemma2} and Lemma~\ref{cond:lemma1}. By combining~\eqref{cond:eq1} and~\eqref{cond:eq2} we get $\norm{A^{-1}}_2 \lesssim h^{-d}$. Hence, we are done.
\end{proof}

\section{The purely Dirichlet case}\label{section6}
The goal of this section is to sketch the main steps required to analyze formulation~\eqref{cutfem:disc_pb} in the purely Dirichlet case. If $\Gamma=\Gamma_D$, then we consider the following Raviart-Thomas finite element discretization of problem~\eqref{prob:cont}.

Find $\left(\u_h,p_h\right)\in V_h\times Q_h$ such that
\begin{align}\label{dirichlet:pb2}
	\begin{aligned}
		\int_\Omega \u_h\cdot \vv_h + b_0(\vv_h,p_h) = \int_\Omega\f \cdot \vv_h + \int_\Gamma p_D \vv_h\cdot\n,\qquad&\forall\ \vv_h\in V_h, \\
		b_0(\u_h,q_h)  = \int_\Omega g q_h,\qquad&\forall\ q_h\in Q_h.
	\end{aligned}
\end{align}	
It is natural to equip the discrete spaces $V_h$ and $Q_h$ with $\norm{\cdot}_{H(\dive;\Omega)}$ and $\norm{\cdot}_{L^2(\Omega)}$, respectively. It is readily seen that formulation~\eqref{dirichlet:pb2} satisfies the standard stability estimates for saddle point problems (see the hypotheses of Theorem~5.2.5 of~\cite{MR3097958}). Hence \emph{a priori} error estimates can be obtained by standard techniques (again, we refer the reader to Section~5.2 of~\cite{MR3097958}). This time, the convergence rates are optimal because of the choice of the norms. However, the conditioning of the arising linear system will still strongly depend on the way the boundary cuts the mesh. As for the general case with mixed boundary conditions, we propose to cure this issue with a ghost penalty-based stabilization. 
\begin{align*}
	\tilde{\bm{j}}_h (\w_h,\vv_h):=& \sum_{f\in \mathcal F_h^{\Gamma}} \sum_{j=0}^k h^{2j+1} \int_f [\partial_n^j \w_h] [\partial_n^j \vv_h],\qquad \w_h,\vv_h \in V_h,\\
	\tilde{{j}}_h (r_h,q_h):=&  \sum_{f\in \mathcal F_h^{\Gamma}} \sum_{j=0}^k h^{2j+1} \int_f [\partial_n^j  r_h] [\partial_n^j  q_h],\qquad r_h,q_h \in Q_h.	
\end{align*}	
Let us observe that the ghost penalty operators scale differently than in~\eqref{eq:ghost_penalty} because of the different choices of the norms to the mixed case. The stabilized formulation reads as follows.

Find $\left(\u_h,p_h\right)\in V_h\times Q_h$ such that
\begin{align}\label{dirichlet:pb2_stab}
	\begin{aligned}
		\int_\Omega \u_h\cdot \vv_h +\tilde{\bm{j}}_h(\u_h,\vv_h) + b_0(\vv_h,p_h)  = \int_\Omega\f \cdot \vv_h + \int_\Gamma p_D \vv_h\cdot\n,\qquad&\forall\ \vv_h\in V_h, \\
		b_0(\u_h,q_h) +	\tilde{{j}}_h (p_h,q_h)  = \int_\Omega g q_h,\qquad&\forall\ q_h\in Q_h,
	\end{aligned}
\end{align}	
By mimicking the same lines of Section~\ref{section5}, it is possible to show that the condition number of~\eqref{dirichlet:pb2_stab} goes as $\mathcal O\left(h^{-1} \right)$.

\section{Numerical examples}\label{section7}
In the following, we use the ghost penalty projection-based operators $\bm{s}_h(\cdot,\cdot)$ and $s_h(\cdot,\cdot)$ defined in Remark~\ref{rmk:ghost_penalty}. As already observed, this choice is convenient from the implementation point of view since it spares us to calculate the jumps of possibly high-order normal derivatives through the facets in the vicinity with the cut boundary. We limit the scope of our numerical investigations to the case of Cartesian quadrilateral meshes in $2$D. To integrate in the cut elements, we employ the strategy depicted in~\cite{MR3982623}: the cut elements are reparametrized using polynomials with the same approximation order of the Raviart-Thomas space employed for the space discretization.

\begin{figure}[!ht]
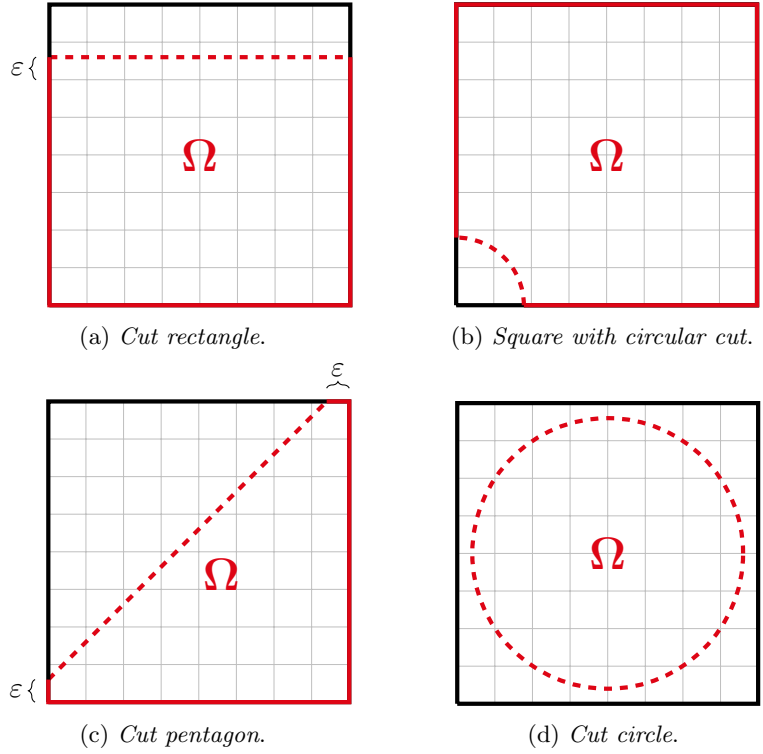

	\centering
	\subfloat[][\emph{Cut rectangle}.\label{fig:trimmed_square}]
	{
		\includestandalone[]{mesh_epsilon}
	}
	\hspace{1cm}
	\subfloat[][\emph{Square with circular cut}.\label{figure:fig5}]
	{
		\includestandalone[]{plate_with_hole}
	}
	\\
	\subfloat[][\emph{Cut pentagon}.\label{fig:trimmed_pentagon}]
	{
		\includestandalone[]{mesh_epsilon_pentagon}
	}
	\hspace{1cm}
	\subfloat[][\emph{Cut circle}.\label{figure:fig9}]
	{
		\includestandalone[]{circle}
	}
	\caption{Unfitted domains employed for the numerical experiments.}\label{figure:domains}
\end{figure}

\subsection{Convergence rates}

\subsubsection{Cut pentagon}
Let $\Omega_0=\left(0,1\right)^2$, $\Omega_1$ be the triangle with vertices $(0,0.25+\eps)-(0,1)-(0.75-\eps,1)$ and $\Omega=\Omega_0\setminus \overline\Omega_1$, with $\eps=10^{-9}$, see Figure~\ref{fig:trimmed_pentagon}. The reference solutions are
\begin{align*}
	\u_{ex}=
	\begin{pmatrix}
		y\sin(x)\cos(y)\\
		-x\sin(y)\cos(x)
	\end{pmatrix},
	\qquad
	p_{ex}= x^3y.
\end{align*}
Neumann boundary conditions are imposed on the whole boundary, weakly just on the sides that do not fit the underlying mesh. We compute the approximation errors of the velocity and pressure fields for different degrees $k\in\{0,1,2\}$, see Figure~\ref{figure:fig4}. We have optimal convergence, despite the sub-optimal result of Theorem~\ref{thm:apriori}. 

\begin{figure}[!ht]
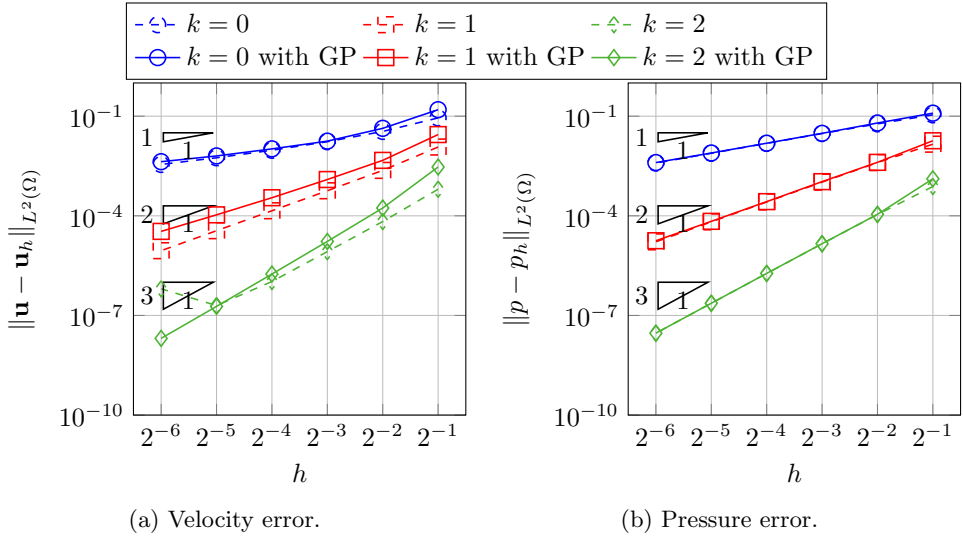

	\centering
	\includestandalone[]{legend_degrees2}  
	\\	
	\subfloat[][Velocity error.]
	{
		\includestandalone[]{error_velocity_pentagon_eps9_gp}
	}
	\subfloat[][Pressure error.]
	{
		\includestandalone[]{error_pressure_pentagon_eps9_gp}
	} 
	\caption{Convergence rates of the errors in the \emph{cut pentagon}.}\label{figure:fig4}
\end{figure}

\subsubsection{Cut circle}
Let us consider $\Omega= B_r(x_0)$, with $x_0=\left(0.5,0.5\right)$ and $r=0.45$, see Figure~\ref{figure:fig9}. The manufactured solution for the pressure is
\begin{align*}
	p_{ex}=\sin \left( 2\pi x \right) \cos \left( 2\pi \right), 
\end{align*}
and the velocity field is computed from Darcy's law~\eqref{prob:cont} when $\f$ is taken to be zero. We weakly prescribe Neumann boundary conditions on the whole boundary, which does not fit the underlying mesh. The $L^2$-errors for the velocity and pressure fields are plotted in Figure~\ref{figure:fig11}. We can see optimal orders of convergence and better accuracy in the stabilized case.

\begin{figure}[!ht]
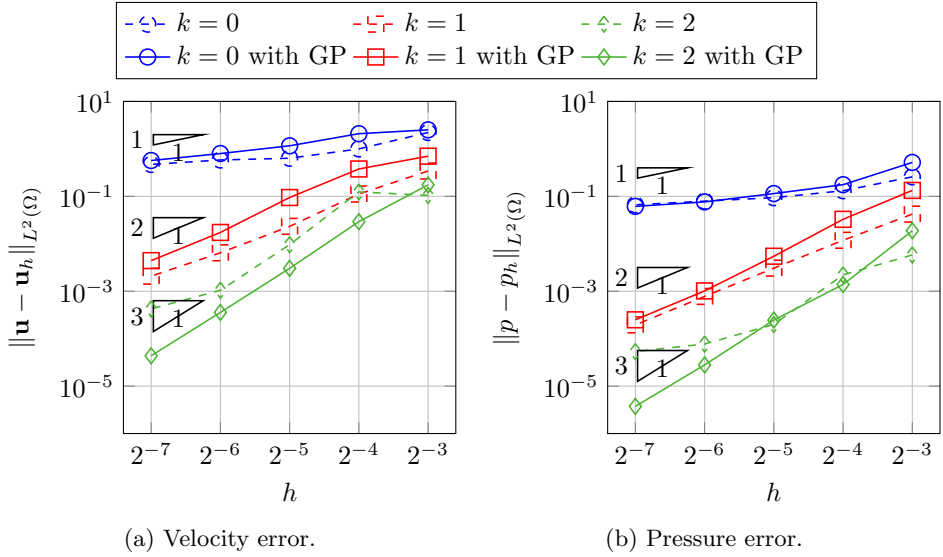

	\centering
	\includestandalone[]{legend_degrees2}  
	\\	
	\subfloat[][Velocity error.]
	{
		\includestandalone[]{error_velocity_circle}
	}
	\subfloat[][Pressure error.]
	{
		\includestandalone[]{error_pressure_circle}
	}
	\caption{Convergence rates of the errors in the \emph{cut circle}.}\label{figure:fig11} 
\end{figure}	

\subsection{Condition number}
\subsubsection{Cut rectangle}
\begin{figure}[!ht]
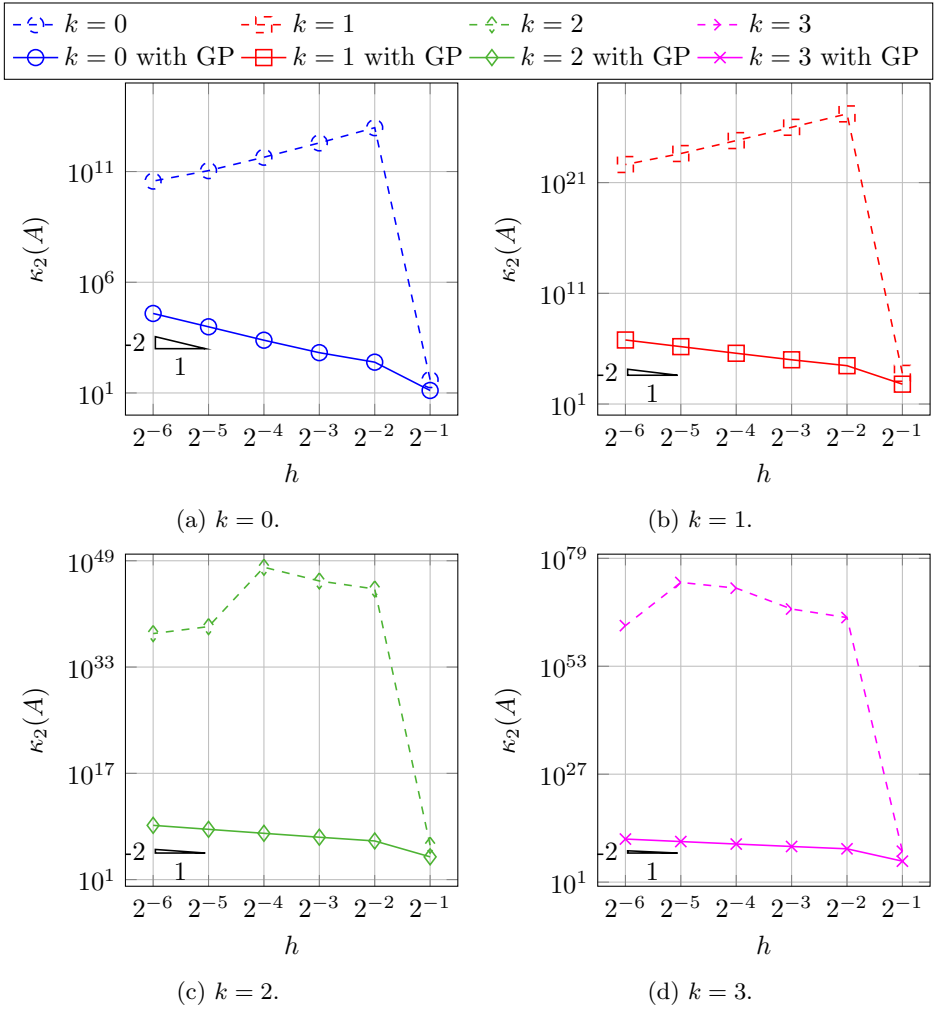

	\centering
	\includestandalone[]{legend_condnumb}  
	\\	
	\subfloat[][$k=0$.]
	{
		\includestandalone[]{condnumb_trimmedsquare_k=0}
	}
	\subfloat[][$k=1$.]
	{
		\includestandalone[]{condnumb_trimmedsquare_k=1}
	}\\
	\subfloat[][$k=2$.]
	{
		\includestandalone[]{condnumb_trimmedsquare_k=2}
	}
	\subfloat[][$k=3$.]
	{
		\includestandalone[]{condnumb_trimmedsquare_k=3}
	}
	\caption{Condition number for the \emph{cut rectangle} with Neumann boundary conditions.}\label{figure:fig80}
\end{figure}
\begin{figure}[!ht]
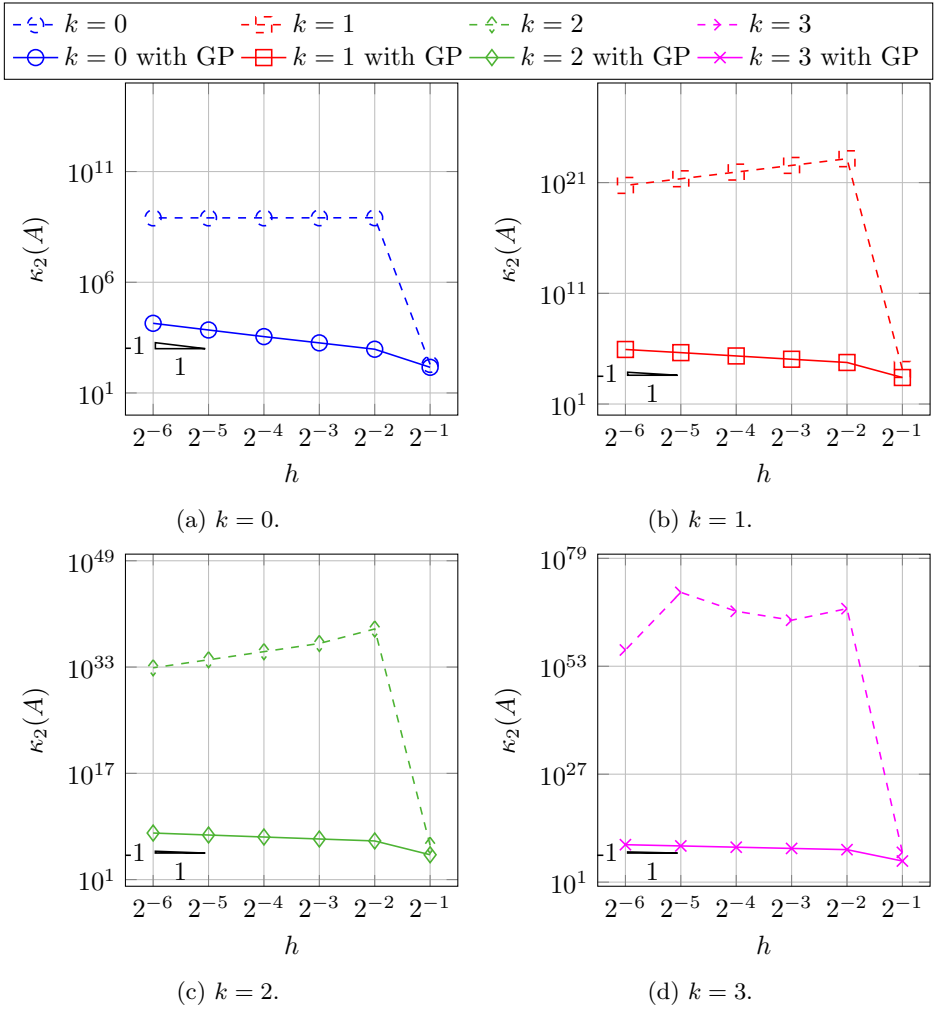

	\centering
	\includestandalone[]{legend_condnumb}  
	\\	
	\subfloat[][$k=0$.]
	{
		\includestandalone[]{condnumb_trimmedsquare_k=0_dirichlet}
	}
	\subfloat[][$k=1$.]
	{
		\includestandalone[]{condnumb_trimmedsquare_k=1_dirichlet}
	}\\
	\subfloat[][$k=2$.]
	{
		\includestandalone[]{condnumb_trimmedsquare_k=2_dirichlet}
	}
	\subfloat[][$k=3$.]
	{
		\includestandalone[]{condnumb_trimmedsquare_k=3_dirichlet}
	}
	\caption{Condition number for the \emph{cut rectangle} with Dirichlet boundary conditions.}\label{figure:fig80_dirichlet}
\end{figure}
Let us consider as physical domain the \emph{cut rectangle} $\Omega = \left(0,1\right) \times \left( 0,0.75+\eps\right)$ where $\eps=10^{-7}$, see Figure~\ref{fig:trimmed_square}. We impose Neumann boundary conditions weakly on the whole boundary. In Figure~\ref{figure:fig80} we compare the conditioning of the stabilized and non-stabilized formulations. Similarly, in Figure~\ref{figure:fig80_dirichlet} we compare the conditioning of the stabilized and non-stabilized formulations when Dirichlet boundary conditions are imposed. The results are in agreement with the theory developed in Sections~\ref{section5},~\ref{section6}. In particular, we observe that without stabilization, the condition number is negatively affected by the presence of cut elements and seems to grow without control, while in the stabilized case, the expected scaling of the conditioning is restored: $\mathcal O(h^{-2})$ for the Neumann case and $\mathcal O (h^{-1})$ for the purely Dirichlet case.

\subsection{On mass conservation}\label{numerical_experiment:mass}
Mass conservation is an important feature for finite element discretizations of incompressible flows, whose violation is not tolerable in many applications~\cite{MR3561143}. As observed in Remark~\ref{remark:mass_conservation}, the Raviart-Thomas finite element satisfies $\dive V_h =Q_h$ in the unfitted configuration as well. The formulation~\eqref{cutfem:disc_pb} as it stands is bound to fail to satisfy the incompressibility constraint in a weak sense, which is why, to exploit this property when the right-hand side $g$ vanishes, we consider the following non-symmetric variant of formulation~\eqref{cutfem:disc_pb}.

Find $\left( \u_h,p_h\right) \in V_h\times Q_h$ such that
\begin{equation}\label{cutfem:disc_pb_nonsym}
	\begin{aligned}	
		a_h(\u_h,\vv_h) +  b_1 (\vv_h,p_h) +\bm{j}_h(\u_h,\vv_h)  = \int_\Omega \f\cdot \vv_h + \int_{\Gamma_D} p_D \vv_h\cdot\n + h^{-1} \int_{\Gamma_N} u_N  \vv_h\cdot\n, \quad &\forall\ \vv_h\in V_h, \\
		b_0 (\u_h,q_h) +j_h(p_h,q_h)  =0,\quad & \forall\ q_h\in Q_h,
	\end{aligned}
\end{equation}
where
\begin{align*}
	b_0(\w_h,q_h):= \int_\Omega q_h \dive \w_h,\qquad \w_h\in V_h, q_h\in Q_h.
\end{align*}

Let us test formulation~\eqref{cutfem:disc_pb_nonsym} in the stabilized and non-stabilized cases. We take as reference solutions
\begin{align*}
	\u_{ex}=
	\begin{pmatrix}
		\cos(x)\operatorname{sinh}(y)\\
		\sin(x)\operatorname{cosh}(y)
	\end{pmatrix},
	\qquad
	p_{ex}=-\sin(x)\operatorname{sinh}(y)-\left(\cos(1)-1\right)\left(\operatorname{cosh}(1)-1\right).
\end{align*}
Note that $\dive \u_{ex} = 0$. We impose Dirichlet boundary conditions on $\{(x,y):x=2, 0\le y\le 2\}$ and on $\{(x,y) : 0\le x\le 2, y=2\}$, and weak Neumann boundary conditions on the rest of the boundary. The computed divergence of the discrete solution for the velocity is shown in Figure~\ref{figure:div_plate}. We observe that the ghost penalty stabilization pollutes the divergence of the velocity, hence also the non-symmetric formulation~\eqref{cutfem:disc_pb_nonsym} fails to the mass conservation at the discrete level.


\begin{figure}[!ht]
	\centering
	\subfloat[Without stabilizations.]
	{
		\includegraphics[width=0.33\textwidth,keepaspectratio=true]{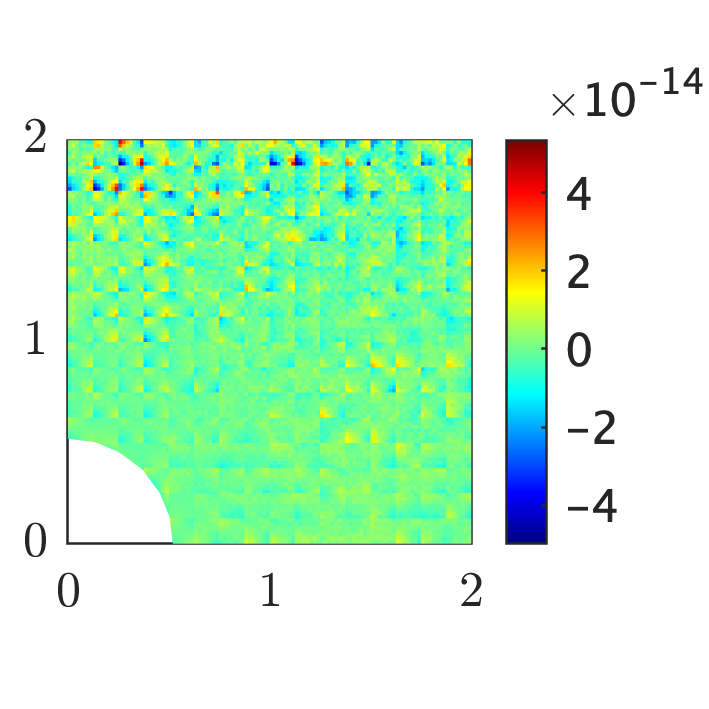}  \label{figure:div_plate1}
	}
	\subfloat[With ghost penalty.]
	{
		\includegraphics[width=0.33\textwidth,keepaspectratio=true]{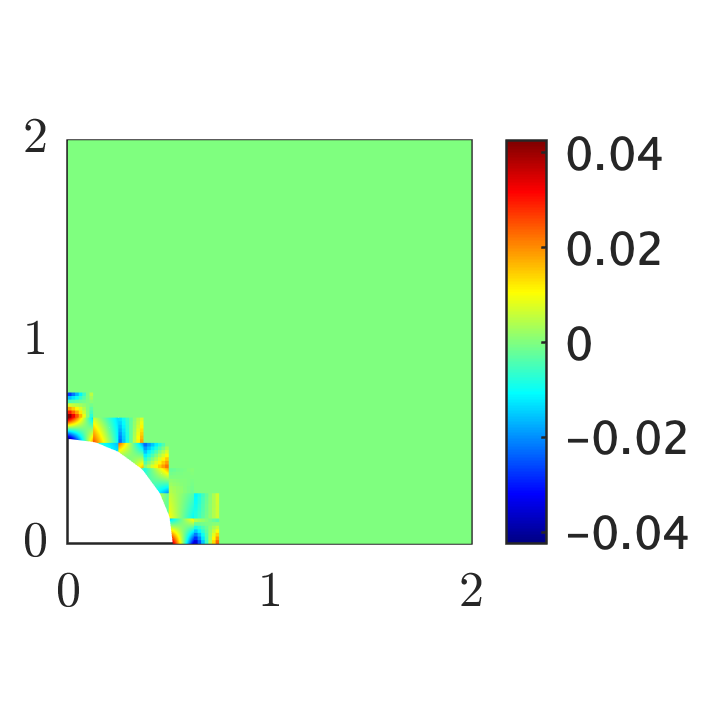} \label{figure:div_plate2}
	} 
	\caption{\emph{$\dive\u_h$ in the square with circular cut} obtained with~\eqref{cutfem:disc_pb_nonsym}.}\label{figure:div_plate}
\end{figure}

\appendix
\section{Auxiliary results}

\begin{lemma}\label{lemma:trace_ineq}
	There exists $C>0$ depending on $\Gamma$, but not on the way it cuts the mesh, such that for every $K\in\mathcal G_h$:
	\begin{equation*}
		\norm{v}^2_{L^2(K\cap\Gamma)}\le C \norm{v}_{L^2(K)}\norm{v}_{H^1(K)},\qquad\forall\ v\in H^1(K).
	\end{equation*}	
\end{lemma}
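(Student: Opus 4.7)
The plan is to adapt the classical Hansbo--Hansbo ``flux'' argument to the cut-element setting. Since $\Gamma$ is Lipschitz-regular, one can extend the outward unit normal $\n$ to a vector field $\bm\phi\in \bm W^{1,\infty}(\R^d)$ supported in a tubular neighborhood of $\Gamma$ such that $\bm\phi\cdot\n=1$ on $\Gamma$ and $\norm{\bm\phi}_{W^{1,\infty}(\R^d)}$ depends only on $\Gamma$. For a cut element $K\in\mathcal G_h$, the boundary of $K\cap\Omega$ decomposes into the two disjoint pieces $\Gamma\cap K$ and $\partial K\cap\Omega$, so the divergence theorem applied to $v^2\bm\phi$ on $K\cap\Omega$ gives
\[
\int_{\Gamma\cap K}v^2\,ds
= \int_{K\cap\Omega}\dive(v^2\bm\phi)\,dx
\;-\;\int_{\partial K\cap\Omega}v^2\,\bm\phi\cdot\n_K\,ds,
\]
where $\n_K$ is the outward unit normal to $\partial K$. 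This identity is the heart of the argument: it trades an integral on the (unknown) hypersurface $\Gamma\cap K$ for integrals on the fitted element $K$ and its fitted boundary $\partial K$.

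The two resulting contributions are then estimated separately. For the volume term, expanding $\dive(v^2\bm\phi)=2v\,\nabla v\cdot\bm\phi+v^2\,\dive\bm\phi$ and applying Cauchy--Schwarz with the uniform bound on $\bm\phi$ yields a bound of the form $C\norm{v}_{L^2(K)}\norm{v}_{H^1(K)}$, with $C$ depending only on $\Gamma$. For the element-boundary term, I would invoke the classical multiplicative trace inequality on shape-regular mesh elements, $\norm{v}^2_{L^2(\partial K)}\lesssim \norm{v}_{L^2(K)}\norm{v}_{H^1(K)}$, obtained by a standard scaling from the reference element $\hat K$ combined with the continuity of the trace $H^1(\hat K)\to L^2(\partial\hat K)$. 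Together with the $L^\infty$-bound on $\bm\phi$, these two estimates yield the desired inequality.

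The decisive feature of this argument is that neither bound depends on the relative position of $K$ and $\Gamma$: the cut hypersurface appears only through the fixed extension $\bm\phi$, while the mesh-boundary estimate uses only shape-regularity. The main technical obstacle is therefore the construction of $\bm\phi$ with a $W^{1,\infty}$-norm independent of the cut; this requires $\Gamma$ to admit a tubular neighborhood with a bilipschitz nearest-point projection, so that $\n$ extends smoothly. This step is exactly the one at which the dependence of $C$ on $\Gamma$ (and only on $\Gamma$) is incurred, and it is essentially the only nontrivial geometric ingredient in the proof.
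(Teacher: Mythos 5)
Your overall strategy---extending the unit normal to a field $\bm\phi$ with $\bm\phi\cdot\n=1$ on $\Gamma$ and applying the divergence theorem to $v^2\bm\phi$ on $K\cap\Omega$---is precisely the classical argument behind the references the paper cites for this lemma (the paper itself gives no proof, only citations), and your treatment of the volume term is correct and uniform in the cut. The gap is in the element-boundary term. The ``classical multiplicative trace inequality'' you invoke on a mesh element, $\norm{v}^2_{L^2(\partial K)}\lesssim \norm{v}_{L^2(K)}\norm{v}_{H^1(K)}$ with a constant independent of $h_K$, is false: scaling the reference-element inequality $\norm{\hat v}^2_{L^2(\partial \hat K)}\le C\norm{\hat v}_{L^2(\hat K)}\norm{\hat v}_{H^1(\hat K)}$ back to $K$ produces
\begin{equation*}
	\norm{v}^2_{L^2(\partial K)}\le C\left(h_K^{-1}\norm{v}^2_{L^2(K)}+\norm{v}_{L^2(K)}\norm{\nabla v}_{L^2(K)}\right),
\end{equation*}
and the $h_K^{-1}$ term cannot be dropped: for $v\equiv 1$ the left-hand side scales like $h_K^{d-1}$ while $\norm{v}_{L^2(K)}\norm{v}_{H^1(K)}$ scales like $h_K^{d}$.

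The same test function shows that the statement as written cannot hold with a constant uniform in $h$ either, since $\abs{K\cap\Gamma}\sim h_K^{d-1}$ for a generic cut element while the right-hand side is of order $h_K^d$; so the obstruction you hit is not an artifact of your method. What your argument actually delivers---and what the cited lemmas of Hansbo--Hansbo and Burman et al.\ state---is
\begin{equation*}
	\norm{v}^2_{L^2(K\cap\Gamma)}\le C\left(h_K^{-1}\norm{v}^2_{L^2(K)}+h_K\norm{\nabla v}^2_{L^2(K)}\right),
\end{equation*}
with $C$ independent of the cut. This weighted form is the one the paper implicitly relies on downstream (each use either multiplies by $h^{-1}$ or is combined with an inverse estimate, so the final rates are unchanged). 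In short: right decomposition and right geometric ingredient, but the last step asserts an $h$-uniform multiplicative bound on $\partial K$ that fails; keep the $h_K$-weights and the proof closes.
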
	
\begin{proof}
	See, for instance, Lemma~3 in~\cite{MR1941489}, Lemma~3 in~\cite{MR2075053}, or Lemma~4.1 of~\cite{MR3407236}.
\end{proof}	
\begin{lemma}\label{lemma:disc_trace_ineq}
There exists $C>0$ depending on $\Gamma$, but not on the way it cuts the mesh, such that for every $K\in\mathcal G_h$:
\begin{align*}
h^{\frac{1}{2}} \norm{\vv_h\cdot\n}_{L^2(\Gamma_K)} \le C  \norm{\vv_h}_{L^2(K)},\qquad\forall\ \vv_h\in V_h.
\end{align*}	
\end{lemma}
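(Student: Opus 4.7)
The plan is to chain two ingredients that are already available: the continuous multiplicative trace inequality of Lemma~\ref{lemma:trace_ineq}, whose constant does not depend on how $\Gamma$ cuts $K$, together with a standard polynomial inverse estimate on the full element $K$, which gives an $h^{-1}$ relation between the $H^1$ and $L^2$ norms of a Raviart-Thomas shape function.

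Concretely, I would proceed as follows. Fix $K\in\mathcal{G}_h$ and $\vv_h\in V_h$. First bound the Neumann trace by the full vector trace via $|\vv_h\cdot\n|\le |\vv_h|$ pointwise on $\Gamma_K$, so that
\begin{equation*}
\norm{\vv_h\cdot\n}_{L^2(\Gamma_K)}^2\le \norm{\vv_h}_{L^2(\Gamma_K)}^2.
\end{equation*}
Apply Lemma~\ref{lemma:trace_ineq} componentwise to obtain
\begin{equation*}
\norm{\vv_h}_{L^2(\Gamma_K)}^2 \le C\,\norm{\vv_h}_{L^2(K)}\,\norm{\vv_h}_{H^1(K)},
\end{equation*}
with $C$ independent of the mesh-boundary intersection. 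Next, since $\vv_h|_K\in\mathbb{RT}_k(K)$ is a polynomial of fixed degree on a shape-regular (hence non-degenerate) element $K$, the standard inverse inequality (Proposition~6.3.2 of~\cite{MR1299729}) yields $\norm{\vv_h}_{H^1(K)}\lesssim h^{-1}\norm{\vv_h}_{L^2(K)}$. Combining the three estimates gives $\norm{\vv_h\cdot\n}_{L^2(\Gamma_K)}^2\lesssim h^{-1}\norm{\vv_h}_{L^2(K)}^2$, and taking square roots and multiplying by $h^{1/2}$ produces the claimed bound.

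The only point that deserves attention is that both the constant in Lemma~\ref{lemma:trace_ineq} and the constant in the inverse estimate must be uniform with respect to how $\Gamma$ cuts $K$; the former is guaranteed by the cited references, while the latter is automatic because the inverse estimate is applied on the whole (uncut) element $K$ whose geometry is controlled by the shape-regularity constant $\sigma$ alone. There is no genuine obstacle here, and no reason to invoke the ghost penalty: the estimate is local to a single cut element and does not try to control $\vv_h$ on the fictitious part $K\setminus\Omega$.
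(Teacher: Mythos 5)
Your proof is correct and follows essentially the same route as the paper, which simply invokes Lemma~\ref{lemma:trace_ineq} together with ``finite dimensionality and a scaling argument'' --- the latter being exactly the inverse estimate $\norm{\vv_h}_{H^1(K)}\lesssim h^{-1}\norm{\vv_h}_{L^2(K)}$ you use. (Only a cosmetic caveat: on bilinearly mapped quadrilaterals $\restr{\vv_h}{K}$ is not literally a polynomial, but the inverse estimate still holds by the finite-dimensionality-plus-scaling argument on the reference element, which is what the paper means.)
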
	
\begin{proof}
It follows by Lemma~\ref{lemma:trace_ineq}, finite dimensionality and a scaling argument.
\end{proof}

...
\clearpage
\bibliographystyle{plain}
\bibliography{bibliography}
\end{document}